\documentclass[preprint,12pt]{elsarticle}
\usepackage{amsmath}
\usepackage{amsthm}
\usepackage{amsfonts}
\usepackage{amssymb}
\usepackage{amsbsy}

\usepackage{float}

\usepackage[pagebackref=true, bookmarksopen=true,colorlinks=true, linkcolor=red,citecolor=blue]{hyperref}

\usepackage[capitalise]{cleveref}  

\usepackage{tikz}
\usetikzlibrary{decorations.pathmorphing}

\usepackage{subcaption}

\newtheorem{theorem}{Theorem}[section]
\newtheorem{lemma}[theorem]{Lemma}
\newtheorem{corollary}[theorem]{Corollary}
\begin{document}
	
	\begin{abstract}
We introduce two new types of graph configurations, the Jflower and the Jposy, which generalize the classical flower and posy configurations of Edmonds, Sterboul, and Deming in the context of maximum matchings. These generalized configurations allow greater flexibility in characterizing non-K\H{o}nig--Egerv\'{a}ry graphs and provide new tools for studying matching-theoretic properties.

Our main result shows that the sets of vertices covered by classical configurations (flowers and posies), restricted configurations (Tposies), and generalized configurations (Jflowers and Jposies) coincide. This equivalence yields a unified characterization of what we call \emph{Sterboul–Deming graphs}—graphs in which every vertex belongs to some configuration relative to an appropriate maximum matching.
	\end{abstract}

	\begin{keyword}
		Edge-perfect graphs\sep
		forbidden subgraphs\sep
		K\H{o}nig–Egerv\'ary property\sep
		K\H{o}nig–Egerv\'ary graphs\sep
		maximum matching.
		
		\MSC 15A09, 05C38
	\end{keyword}

	\begin{frontmatter}
		
		\title{Generalized Edmonds-Sterboul-Deming configurations.\\ Part 1: Sterboul-Deming graphs}
		
		\author[pan,daj]{Daniel A. Jaume}
		\ead{djaume@unsl.edu.ar}
		
		\author[pan]{Cristian Panelo}
		\ead{crpanelo@unsl.edu.ar}
		\author[pan,daj]{Kevin Pereyra}
		\ead{kdpereyra@unsl.edu.ar}
		\address[pan]{Universidad Nacional de San Luis, Argentina.}
		\address[daj]{IMASL-CONICET, Argentina.}


		\date{Received: date / Accepted: date}

	\end{frontmatter}
	%
	%
	%
	\section{Introduction}
	
	The interplay between matchings and vertex covers in graphs has been a central theme in combinatorial optimization. A graph $G$ is called \emph{K\H{o}nig--Egerv\'{a}ry} if its matching number equals its vertex cover number, or equivalently, if $\alpha(G) + \mu(G) = |G|$, where $\alpha(G)$ denotes the independence number and $\mu(G)$ the matching number.
	
	The structural characterization of K\H{o}nig--Egerv\'{a}ry graphs was achieved through the introduction of specific subgraph configurations. 
	
	In 1965, Edmonds~\cite{edmonds1965paths} introduced the notions of \emph{blossom} and \emph{flower} as part of his polynomial-time algorithm for finding maximum matchings in arbitrary graphs. Later, Sterboul~\cite{sterboul1979characterization} and, independently, Deming~\cite{deming1979independence} introduced a third configuration called \emph{posy} (referred to as blossom pairs by Deming). These configurations characterize non-K\H{o}nig--Egerv\'{a}ry graphs: A graph is K\H{o}nig--Egerv\'{a}ry if and only if it contains no flowers or posies relative to any maximum matching.
	
Following Korach et al.~\cite{korach2006subgraph}, we use the term \emph{configuration}  to denote a subgraph together with an associated maximum matching. This terminology highlights the dependence of such structures on both the graph topology and the chosen matching.

Recent work by Jaume and Molina~\cite{jaume2020fpke} introduced a dual perspective by defining \emph{FP-graphs}—graphs in which every vertex belongs to some configuration (either a flower or a posy). Such graphs can be viewed as the structural counterparts to K\H{o}nig–Egerv\'{a}ry graphs. 

In this work, we adopt a historical perspective and propose the name \emph{Sterboul–Deming graphs} for this class, in honor of the two researchers who independently discovered posies and established their relationship with non-K\H{o}nig–Egerv\'{a}ry graphs.
	
	The classical configurations, though sufficient for characterizing non--K\H{o}nig–Egerv\'{a}ry graphs, prove restrictive when analyzing more complex structural properties. For instance, flowers are constructed from alternating paths (which, by definition, cannot revisit vertices), while posies impose strict conditions on path lengths and disjointness. These constraints, while essential for their original applications, limit the flexibility required for developing more general decomposition theories.
	
	To address this limitation, we introduce two generalized configurations, \emph{Jflowers} and \emph{Jposies}, that retain the essential vertex-covering properties of their classical counterparts while offering greater structural flexibility.
	
	Our main theoretical contribution, presented in \cref{teo_main1}, is to establish that these generalized configurations are equivalent to the classical ones with respect to vertex coverage. This result is non-trivial, as its proof requires a careful analysis of how a generalized configuration can be transformed into a classical one while preserving the set of covered vertices. The techniques employed involve matching modifications, walk simplification arguments, and induction on the graph's structure.

	The generalized configurations introduced here serve as building blocks for a comprehensive graph decomposition theory. In subsequent papers, we will show that every graph admits an \emph{SD-KE decomposition}—a partition into Sterboul-Deming components and K\H{o}nig--Egerv\'{a}ry components. This decomposition is additive with respect to the matching number and the independence number, and we conjecture that is also multiplicative with respect to determinant in graphs.
	
	The flexibility of Jflowers and Jposies is crucial for these applications. Classical configurations, with their rigid path requirements, do not always behave well under graph concatenation. In contrast, the walk-based generalization permits more natural decomposition arguments while preserving the fundamental vertex-covering properties that make configurations valuable tools for characterizing graph classes.

	The techniques developed here—particularly the systematic analysis of alternating walks and their simplifications—may be of independent interest for other problems in matching theory and graph structure.
	
	This work is organized as follows: Section~2 establishes notation and reviews preliminary concepts from matching theory. Section~3 introduces Jflower and analyzes the relationship with the classical flower configuration. Section~4 defines Jposy and the more basic configuration od Tposy. Section~5 proves that Jposies mark the same vertices as classical posies, which forms the technical heart of our equivalence theorem. Section~6 shows that Jflower vertices can be covered by classical configurations. Finally, Section~7 synthesizes these results to prove our main theorem.

	%
	%
	%
	
	%
	%
	%
	\section{Notation and preliminaries}
	For all graph-theoretic notions not defined here, we refer the reader to \cite{diestel2000graph} or \cite{chartrand2024graphs}. Throughout this work, all graphs are finite, undirected, and simple (i.e., without loops or multiple edges).
	
	Let $G$ be a graph. The vertex set and edge set of $G$ are denoted by $V(G)$ and $E(G)$, respectively. The order of $G$ is $|G| := |V(G)|$. If $\{u,v\} \in E(G)$, we write $uv$ instead of $\{u,v\}$.
	
	If $H$ is a subgraph of $G$, we write $H \subseteq G$. For $X \subseteq V(G)$, we denote by $G[X]$ the subgraph of $G$ induced by $X$. If $F \subseteq G$, then $G - F$ is the subgraph of $G$ with vertex set $V(G) \setminus V(F)$ and edge set $E(G) \setminus E(F)$. We also write $G - e$, $G - v$, and $G - H$ for the subgraphs obtained by deleting an edge $e \in E(G)$, a vertex $v \in V(G)$, or a subgraph $H \subseteq G$, respectively.

%
%
	
	A \emph{matching} $M$ in $G$ is a set of pairwise nonadjacent edges. An edge $e \in M$ is called \emph{matched} by $M$, and a vertex $v \in V(G)$ is \emph{matched by $M$} if $v$ is an endpoint of some $e \in M$; otherwise, $v$ is \emph{unmatched by $M$}. A vertex set $H \subseteq V(G)$ is said to be \emph{matchable} if there exists a matching in $G$ that saturates all vertices in $H$. 	A matching defines an involution in $G$: we adopt the convention that $M(v) = u$ if $uv \in M$, and $M(v) = v$ if $v$ is unmatched by $M$. A matching $M$ is \emph{perfect} if every vertex is matched, i.e., $M(v) \ne v$ for all $v \in V(G)$. The \textit{matching number} of $G$, denoted $\mu(G)$, is the maximum size of a matching in $G$. For a subgraph $H$ of $G$, we denote by $M(H)$ the set of edges in $H$ that are matched by $M$. 
	
	A \emph{walk} in $G$ is a sequence of vertices $v_1v_2\cdots v_k$ such that $v_iv_{i+1} \in E(G)$ for all $i$. A \emph{path} in $G$ is walk where the vertices are all different. Given two paths $P=u\dots v$ and $Q = w\dots x$, if $vw\in E(G)$, we write $P, Q:= u\dots v w \dots x$. With $P^{-1}$ we denote the path $v\dots u$. If $G$ is connected and $u,v\in V(G)$, with $uPv$ we denote a path between $u$ and $v$. The \emph{distance} from a vertex $x$ to a vertex $y$ is the length of the shortest path $xPy$ and is denoted $d_G(x,y)$.
	
	Let $M$ be a matching in $G$. A path or a walk is called \emph{alternating} with respect to $M$ if, for each pair of consecutive edges in the path (walk), exactly one of them belongs to $M$. Given an alternating path (walk) $P$, we say that $P$ is: \emph{mm-alternating} if it starts and ends with edges in $M$, \emph{nn-alternating} if it starts and ends with edges not in $M$, \emph{mn-alternating} if it starts with an edge in $M$ and ends with one not in $M$, and \emph{nm-alternating} if it starts with an unmatched edge and ends with a matched edge.
	
	A \emph{cycle} in $G$ is called \emph{even} if it has an even number of edges. An \emph{even alternating cycle} with respect to a matching $M$ is an even cycle whose edges alternate between belonging and not belonging to $M$.

	An \textit{independent set} in a graph is a set of vertices, no two of which are adjacent. The \textit{independence number}, defined as the cardinality of a maximum independent set, is denoted by $\alpha(G)$. 
	
	The \textit{(vertex) covering number}, \(\tau(G)\), is the minimum number of vertices required to intersect every edge of \(G\). Graphs satisfying \(\mu(G) = \tau(G)\) are called \textit{K\H{o}nig--Egerv\'{a}ry graphs} (KE graphs, for short) or are said to \textit{have the K\H{o}nig--Egerv\'{a}ry property}. By Gallai's Theorem, a graph $G$ is a K\H{o}nig-Egerv\'ary graph if and only if $\alpha(G) + \mu(G) = |G|$.
	
	An \emph{even subdivision} of an edge $uv$ is the path $uw_1w_2v$, where $w_1$ and $w_2$ are new vertices. An even subdivision of a graph $G$ is any graph obtained from $G$ by repeatedly applying even subdivisions to its edges.

%
	%
	%
	%
	%
	%
	%
	
	\section{Jflower}
	The Jflowers are a generalizations of flowers where the alternating path from the blossom base to an unsaturated vertex is replaced by an alternating walk, allowing for repeated vertices and more complex routing.

	Edmonds \cite{edmonds1965paths} defines the following configurations for a graph \( G \) relative to a matching \( M \). An \( M \)-\emph{blossom} is an odd cycle of length \( 2k + 1 \) in \( G \) that contains exactly \( k \) edges from \( M \). The \emph{base} of a blossom is the unique vertex that is not matched by \( M \) to another vertex within the blossom.
	
	The graph in \cref{fig:Econfigurations} contains a maximum matching, shown in red. The cycles $0,1,2,0$ and $2,3,4,2$\footnote{When we label the vertices of a graph with integers, we write paths and walks using commas.}, together with the matching, form two blossoms with bases $0$ and $2$, respectively; see \cref{fig:blossoms}.

	\begin{figure}[H]
		\centering
		\def \escala{0.65}
		\def \ancho{0.32}
		\def \amplitud{2.5mm}
		\def \opacidad{0.2}
		\begin{subfigure}{\ancho\textwidth}
			\centering
			\begin{tikzpicture}[scale=\escala]
				
				\draw[yellow, opacity=2*\opacidad, line width=10pt, rounded corners] (3,4) -- (3,6) -- (5,6) -- cycle;
				\draw[green, opacity=0.4, line width=10pt, rounded corners]  (5,6) -- (7,6) -- (5,4)-- cycle;
				
				\node[draw,circle, thick, fill = yellow!40!white,scale=0.6] (0) at (3,4) {\(\mathbf{0}\)};
				\node[draw,circle, thick,scale=0.6] (1) at (3,6) {\(1\)};
				\node[draw,circle, thick,scale=0.6] (2) at (5,6) {\textcolor{black}{\(\mathbf{2}\)}};
				\node[draw,circle, thick,scale=0.6] (3) at (7,6) {\(3\)};
				\node[draw,circle, thick,scale=0.6] (4) at (5,4) {\(4\)};
				
				\foreach \from/\to in { {1/2}, {3/2}, {3/4}, {4/2}, {4/0}, {0/0}, {1/0}, {0/2}} {
					\path[draw, very thick] (\from) -- (\to);}
				
				\foreach \from/\to in { {1/2}, {4/3}} {
					\path[decorate, decoration={snake,amplitude=4, segment length=3mm},draw=red, very thick] (\from) -- (\to);}
			\end{tikzpicture}
			\subcaption{Blossoms}\label{fig:blossoms}
		\end{subfigure}
		\begin{subfigure}{\ancho\textwidth}
			\centering
			\begin{tikzpicture}[scale=\escala]
				\draw[blue, opacity=\opacidad, line width=10pt, rounded corners] (3,4) -- (3,6) -- (5,6);
				\draw[green, opacity=\opacidad, line width=10pt, rounded corners] (5,6) -- (7,6) -- (5,4) -- (5,6);
				
				\node[draw,circle, thick,fill = blue!20!white,scale=0.6] (0) at (3,4) {\(\mathbf{0}\)};
				\node[draw,circle, thick,scale=0.6] (1) at (3,6) {\(1\)};
				\node[draw,circle, thick,scale=0.6] (2) at (5,6) {\(2\)};
				\node[draw,circle, thick,scale=0.6] (3) at (7,6) {\(3\)};
				\node[draw,circle, thick,scale=0.6] (4) at (5,4) {\(4\)};
				
				\foreach \from/\to in { {1/2}, {3/2}, {3/4}, {4/2}, {4/0}, {0/0}, {1/0}, {0/2}} {
					\path[draw, very thick] (\from) -- (\to);}
				
				\foreach \from/\to in { {1/2}, {4/3}} {
					\path[decorate, decoration={snake,amplitude=4, segment length=3mm},draw=red,  very thick] (\from) -- (\to);}
			\end{tikzpicture}
			\subcaption{Stem and blossom}\label{fig:stem}
		\end{subfigure}
		\begin{subfigure}{\ancho\textwidth}
			\centering
			\begin{tikzpicture}[scale=\escala]
				\draw[blue, opacity=\opacidad, line width=10pt, rounded corners] (3,4) -- (3,6) -- (5,6) -- (7,6) -- (5,4) -- (5,6);
				
				\node[draw,circle, thick,fill = blue!20!white,scale=0.6] (0) at (3,4) {\(\mathbf{0}\)};
				\node[draw,circle, thick,scale=0.6] (1) at (3,6) {\(1\)};
				\node[draw,circle, thick,scale=0.6] (2) at (5,6) {\(2\)};
				\node[draw,circle, thick,scale=0.6] (3) at (7,6) {\(3\)};
				\node[draw,circle, thick,scale=0.6] (4) at (5,4) {\(4\)};
				
				\foreach \from/\to in { {1/2}, {3/2}, {3/4}, {4/2}, {4/0}, {0/0}, {1/0}, {0/2}} {
					\path[draw, very thick] (\from) -- (\to);}
				
				\foreach \from/\to in { {1/2}, {4/3}} {
					\path[decorate, decoration={snake,amplitude=4, segment length=3mm},draw=red, very thick] (\from) -- (\to);}
			\end{tikzpicture}
			\subcaption{Flower}\label{fig:flower}
		\end{subfigure}
		\caption{Edmonds configurations}\label{fig:Econfigurations}
	\end{figure}
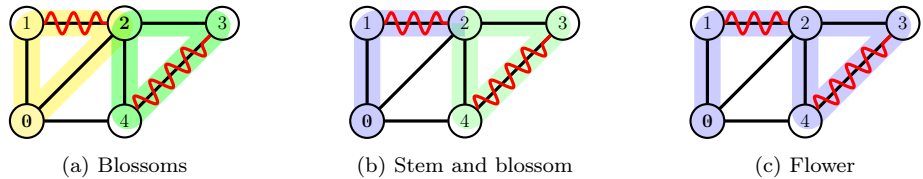

	An \( M \)-\emph{stem} is an even-length \( M \)-alternating path that connects the base of an \( M \)-blossom to a vertex unmatched by \( M \). The base is the only vertex shared between the blossom and the stem. In \cref{fig:stem}, the \( M \)-alternating path \( 0,1,2 \) is the stem associated with the blossom \( 2,3,4,2 \).
	
	An \( M \)-\emph{flower} consists of an \( M \)-blossom together with an associated \( M \)-stem; see \cref{fig:flower}. Note that the stem may be a path of length zero, meaning a blossom can also be a flower. For instance, the blossom \( 0,1,2,0 \) is a flower, whereas the blossom \( 2,3,4,2 \) is not; see \cref{fig:blossoms}.

	 We define the following configuration as a generalization of Edmonds' flower configuration. Given a graph \( G \) and a maximum matching \( M \) of \( G \), an \( M \)-\emph{Jflower} consists of an \( M \)-blossom and an \( M \)-alternating walk (with respect to \( M \)) that starts at the base of the blossom and ends in an \(M\)-unsaturated vertex of \(G\). 
	 
	 In \cref{fig:Jflower}, a maximum matching is shown in red. The marked Jflower is formed by the blossom \( 0,1,2,3,4,0 \), with base \( 0 \), and the \( M \)-alternating walk from vertex \( 0 \) to vertex \( 12 \) given by the sequence
	 \[
	 0,5,9,10,14,13,9,10,14,13,9,10,6,7,8,11,7,6,1,2,12.
	 \]

	\begin{figure}[H]
		\centering
		\def \Vescala{1}
		\def \opacidad{0.2}
		\begin{tikzpicture}[thick,scale=0.6,baseline=(0.base)]
			\node[draw,red,circle,scale=0.6,fill = blue!20!white] (0) at (0,0) {$\mathbf{0}$};
			\node[draw,circle,scale=0.6] (1) at (-2,0) {1};
			\node[draw,circle,scale=0.6] (2) at (-2.7,-2) {2};
			\node[draw,circle,scale=0.6] (3) at (-1,-3) {3};
			\node[draw,circle,scale=0.6] (4) at (0.7,-2) {4};
			\node[draw,circle,scale=0.6] (5) at (2,0) {5};
			\node[draw,circle,scale=0.6] (6) at (2,2+1) {6};
			\node[draw,circle,scale=0.6] (7) at (0,2+1) {7};
			\node[draw,circle,scale=0.6] (8) at (-2,2+1) {8};
			\node[draw,circle,scale=0.6] (9) at (4,0) {9};
			\node[draw,circle,scale=0.6] (10) at (4,2+1) {10};
			\node[draw,circle,scale=0.6] (11) at (-4,1) {11};
			\node[draw,circle,scale=0.6,fill = blue!20!white] (12) at (-4,-3) {$\mathbf{12}$};
			\node[draw,circle,scale=0.6] (13) at (6,0) {13};
			\node[draw,circle,scale=0.6] (14) at (6,2+1) {14};
			\node[draw,circle,scale=0.6] (P) at (2.7,-2) {15};
			\node[draw,circle,scale=0.6] (Q) at (4.7,-2) {16};
			
			\foreach \from/\to in {1/0,0/4,2/3,7/8,5/9,6/10,11/1,2/12,9/13,10/14,4/P,Q/13,11/12,0/8,4/9,1/3,1/7,11/7,6/1} {
				\path[draw, thick] (\from) -- (\to);
			}
			\foreach \from/\to in {0/5,{1/2},3/4,6/7,9/10,8/11,14/13,P/Q} {
				\path [draw, decorate, decoration={snake, segment length=3mm,amplitude=4}, very thick, red] (\from) -- (\to);
			}
			\draw[blue, opacity=\opacidad, line width=10pt, rounded corners] (0) -- (1) -- (2) -- (3) -- (4) -- (0)-- (5)-- (9)-- (10)-- (14)-- (13)-- (9)-- (10)-- (6)-- (7)-- (8)-- (11)-- (7)-- (6)-- (1)--(2)--(12);
		\end{tikzpicture}
		\caption{$M$-Jflower}\label{fig:Jflower}
	\end{figure}
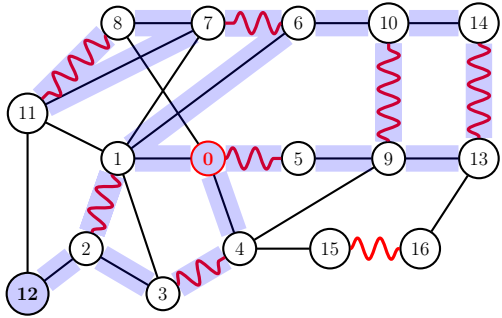

	\section{Jposies}
	
	A Jposy is a generalization of the configuration called a posy, in which the alternating path connecting two blossom bases is replaced by an alternating walk, thereby removing restrictions on self-intersections.

	Sterboul~\cite{sterboul1979characterization} and Deming~\cite{deming1979independence} introduced a configuration called a \emph{posy}. Given a graph $G$ and a maximum matching $M$ of $G$, an $M$-posy is the configuration consisting of two distinct (not necessarily disjoint) $M$-blossoms joined by an odd-length $mm$-alternating path, with respect to $M$, such that the endpoints of the $mm$-alternating path are the bases of the two $M$-blossoms.  
	
	In \cref{fig:posies}, a maximum matching $M$ of a graph $G$ is shown in red. With this matching, $G$ contains many $M$-posies. The one shown in brown in \cref{fig:posies} is formed by the blossom $9,5,4,9$ with base $9$, the blossom $1,2,3,4,5,1$ with base $1$, and the $mm$-alternating path $9,8,5,4,3,2,0,1$.
	
%
%
%
%
%
%
	
	We introduce two configurations related to the notion of a posy, one being a simplification, and the other a generalization.
	
	Given a graph \( G \) and a maximum matching \( M \), an \( M \)-Tposy is defined similarly to a posy, with the additional requirement that the \( M \)-alternating path connecting the bases of the blossoms must be internally disjoint from the blossoms themselves. 
	
	For example, in \cref{fig:Tposy}, the \( M \)-Tposy is formed by the same two blossoms as in the original posy, but the \( M \)-alternating path is \( 9,8,7,6,0,1 \), which is internally disjoint from the blossoms. 
	
	Tposies correspond to even subdivisions of barbells or \(K_{4}\)(see \cite{bonomo2013forbidden}).
	
%
%
%
%

Given a graph \( G \) and a maximum matching \( M \) of \( G \), an \( M \)-Jposy is a configuration consisting of two (not necessarily distinct) \( M \)-blossoms joined by an odd-length \( M \)-alternating walk such that the endpoints of the walk are the bases of the two \( M \)-blossoms.

The following observation is important.
	
\begin{lemma}\label[lemma]{lem:JObs}
		Let \(G\) be a graph and \(M\) be a maximum matching of \(G\). If \(H\) is an \(M\)-Tposy of \(G\) and \(v \in V(H)\), then there exists a non-trivial \(M\)-\(mm\)-alternating path in \(H\) from \(v\) to the base of one the blossoms of \(H\).
\end{lemma}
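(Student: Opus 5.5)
The plan is to handle each vertex \(v\in V(H)\) by an explicit construction, splitting into cases according to where \(v\) lies in \(H\). Write \(H\) as blossoms \(B_1,B_2\) with bases \(b_1,b_2\), joined by the odd \(mm\)-alternating path \(P=b_1\dots b_2\), which is internally disjoint from \(B_1\cup B_2\). Since \(P\) has odd length and starts and ends with \(M\)-edges, it has length at least \(1\) and \(b_1\neq b_2\). Moreover \(M(b_1)\) and \(M(b_2)\) are the neighbours of \(b_1,b_2\) on \(P\).

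The first step is a structural fact: \(b_2\notin V(B_1)\), and symmetrically \(b_1\notin V(B_2)\). Every non-base vertex of a blossom of length \(2k+1\) with \(k\) matched edges is matched along an edge of that blossom. If \(b_2\) were a non-base vertex of \(B_1\), then \(M(b_2)\) would lie on \(B_1\). But \(M(b_2)\) is the neighbour of \(b_2\) on \(P\), so it is either an interior vertex of \(P\), which is impossible by internal disjointness, or equal to \(b_1\) (when \(P=b_1b_2\)). In the latter case \(b_1b_2\in M\) would be an edge of \(B_1\) at its base, contradicting the definition of the base. Hence \(B_1\) meets \(P\) only in \(b_1\), and likewise for \(B_2\).

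Now the cases.
\begin{itemize}
\item If \(v=b_i\), take \(P\) itself, or \(P^{-1}\).
\item If \(v\) is an interior vertex of \(P\), let \(M(v)\) be its \(P\)-neighbour along the matched edge, and follow \(P\) from \(v\) through \(M(v)\) to the endpoint \(b_i\) on that side. This subpath starts with an \(M\)-edge and ends with the terminal \(M\)-edge of \(P\), so it is a non-trivial \(mm\)-alternating path.
\item If \(v\in V(B_1)\setminus\{b_1\}\), traverse \(B_1\) from \(v\) in the direction of the edge \(vM(v)\) until reaching \(b_1\). Because \(B_1\) alternates everywhere except at the two unmatched edges at \(b_1\), this arc is alternating, begins with an \(M\)-edge and ends with a non-\(M\)-edge at \(b_1\). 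Concatenating it with \(P\), which leaves \(b_1\) by an \(M\)-edge, yields an \(mm\)-alternating walk from \(v\) to \(b_2\). By the structural fact and internal disjointness, the arc and \(P\) share only \(b_1\), so the walk is a path.
\item The case \(v\in V(B_2)\setminus\{b_2\}\) is symmetric. A vertex lying in both blossoms is covered by either case.
\end{itemize}

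The only delicate point is ensuring that the concatenation in the blossom case is really a path. That concern is exactly what the first step (\(b_2\notin B_1\)) together with the Tposy disjointness hypothesis settles. So I expect the verification that a blossom cannot contain the other base to be the main, though short, obstacle.
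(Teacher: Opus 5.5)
Your proof is correct and complete. There is nothing in the paper to compare it with: the paper states this lemma as an ``observation'' with no proof and uses it directly in the proof of \cref{teo_main1}, so your argument supplies a proof the paper omits.

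Your case split over $V(B_1)\cup V(B_2)\cup V(P)$ is the natural route:
\begin{itemize}
\item for an interior vertex of $P$, take the subpath of $P$ leaving $v$ along $vM(v)$;
\item for a non-base vertex of $B_i$, take the arc of $B_i$ leaving $v$ along $vM(v)$, which reaches $b_i$ by a non-matching edge, and then append $P$.
\end{itemize}
This gives an $mm$-alternating path in every case.

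The one step that genuinely needs the Tposy hypothesis is the fact $V(B_i)\cap V(P)=\{b_i\}$, and you prove it correctly. This includes the case $P=b_1b_2$, where internal disjointness says nothing. There you correctly use that $b_2$, as a non-base vertex of $B_1$, would be matched along an edge of $B_1$, so that edge would be $b_2b_1$ and would meet the base $b_1$, which is impossible.

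It is worth seeing that this is exactly where a general posy would break the construction. In the paper's posy example the path $9,8,5,4,3,2,0,1$ runs through the blossom vertices $5$ and $4$. So appending $P$ to the arc would not give a path there.
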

			
	\begin{figure}[H]
		\centering
		\begin{subfigure}{0.32\textwidth}
			\centering
			\def \Vescala{1}
			\def \opacidad{0.2}
			\begin{tikzpicture}[thick,scale=0.65,baseline=(0.base)]
				\node[draw,circle, scale=0.6] (0) at (-2,1) {0};
				\node[draw,red,circle,scale=0.6,fill = brown!40!white] (1) at (-4,2) {$\mathbf{1}$};
				\node[draw,circle,scale=0.6] (2) at (-5,1) {2};
				\node[draw,circle,scale=0.6] (3) at (-5,-1) {3};
				\node[draw,circle,scale=0.6] (4) at (-4,-2) {4};
				\node[draw,circle,scale=0.6] (5) at (-3,0) {5};
				\node[draw,circle,scale=0.6] (6) at (-1,3) {6};
				\node[draw,circle,scale=0.6] (7) at (0,2) {7};
				\node[draw,circle,scale=0.6] (8) at (0,0) {8};
				\node[draw,red,circle,scale=0.6,fill = brown!40!white] (9) at (-2,-1) {{$\mathbf{9}$}};
				
				\foreach \from/\to in {1/2,1/5,3/4,0/6,7/8,9/5,9/4,0/5,5/8,2/0} {
					\path[draw] (\from) -- (\to);
				}
				\foreach \from/\to in {2/3,4/5,1/0,6/7,8/9} {
					\path [draw, decorate, decoration={snake, segment length=3mm, amplitude=4}, very thick, red] (\from) -- (\to);
				}
				\draw[brown, opacity=2*\opacidad, line width=10pt, rounded corners] 
				(9) -- (5) -- (4) -- (9) -- (8) -- (5)-- (4)-- (3)-- (2)-- (0)-- (1)-- (2)-- (3)-- (4)-- (5)-- (1);
			\end{tikzpicture}
			\caption{\(M\)-posy}
			\label{fig:posies}
		\end{subfigure}
		\hfill
		\begin{subfigure}{0.32\textwidth}
			\centering
			\def \Vescala{1}
			\def \opacidad{0.2}
			\begin{tikzpicture}[thick,scale=0.65,baseline=(0.base)]
				\node[draw,circle, scale=0.6] (0) at (-2,1) {0};
				\node[draw,red,circle,scale=0.6,fill = brown!40!white] (1) at (-4,2) {$\mathbf{1}$};
				\node[draw,circle,scale=0.6] (2) at (-5,1) {2};
				\node[draw,circle,scale=0.6] (3) at (-5,-1) {3};
				\node[draw,circle,scale=0.6] (4) at (-4,-2) {4};
				\node[draw,circle,scale=0.6] (5) at (-3,0) {5};
				\node[draw,circle,scale=0.6] (6) at (-1,3) {6};
				\node[draw,circle,scale=0.6] (7) at (0,2) {7};
				\node[draw,circle,scale=0.6] (8) at (0,0) {8};
				\node[draw,red,circle,scale=0.6,fill = brown!40!white] (9) at (-2,-1) {{$\mathbf{9}$}};
				
				\foreach \from/\to in {1/2,1/5,3/4,0/6,7/8,9/5,9/4,0/5,5/8,2/0} {
					\path[draw] (\from) -- (\to);
				}
				\foreach \from/\to in {2/3,4/5,1/0,6/7,8/9} {
					\path [draw, decorate, decoration={snake, segment length=3mm, amplitude=4}, very thick, red] (\from) -- (\to);
				}
				\draw[orange, opacity=2*\opacidad, line width=10pt, rounded corners] 
				(9) -- (5) -- (4) -- (9) -- (8) -- (7)-- (6)-- (0)-- (1)-- (2)-- (3)-- (4)-- (5)-- (1);
			\end{tikzpicture}
			\caption{\(M\)-Tposy}
			\label{fig:Tposy}
		\end{subfigure}
		\hfill
		\begin{subfigure}{0.32\textwidth}
			\centering
			\def \Vescala{1}
			\def \opacidad{0.2}
			\begin{tikzpicture}[thick,scale=0.65,baseline=(0.base)]
				\node[draw,circle, scale=0.6] (0) at (-2,1) {0};
				\node[draw,red,circle,scale=0.6] (1) at (-4,2) {1};
				\node[draw,circle,scale=0.6] (2) at (-5,1) {2};
				\node[draw,circle,scale=0.6] (3) at (-5,-1) {3};
				\node[draw,circle,scale=0.6] (4) at (-4,-2) {4};
				\node[draw,circle,scale=0.6] (5) at (-3,0) {5};
				\node[draw,circle,scale=0.6] (6) at (-1,3) {6};
				\node[draw,circle,scale=0.6] (7) at (0,2) {7};
				\node[draw,circle,scale=0.6] (8) at (0,0) {8};
				\node[draw,red,circle,scale=0.6,fill = cyan!40!white] (9) at (-2,-1) {{$\mathbf{9}$}};
				
				\foreach \from/\to in {1/2,1/5,3/4,0/6,7/8,9/5,9/4,0/5,5/8,2/0} {
					\path[draw] (\from) -- (\to);
				}
				\foreach \from/\to in {2/3,4/5,1/0,6/7,8/9} {
					\path [draw, decorate, decoration={snake, segment length=3mm, amplitude=4}, very thick, red] (\from) -- (\to);
				}
				\draw[cyan, opacity=2*\opacidad, line width=10pt, rounded corners] 
				(9) -- (5) -- (4) -- (9) -- (8) -- (5)-- (4)-- (3)-- (2)-- (0)-- (1)-- (2)-- (3)-- (4)-- (5)-- (1)-- (0)-- (6)-- (7)-- (8)-- (9);
			\end{tikzpicture}
			\caption{\(M\)-Jposy}
			\label{fig:Jposy}
		\end{subfigure}
		
		\caption{Examples of posy configurations}
		\label{fig:allposies}
	\end{figure}
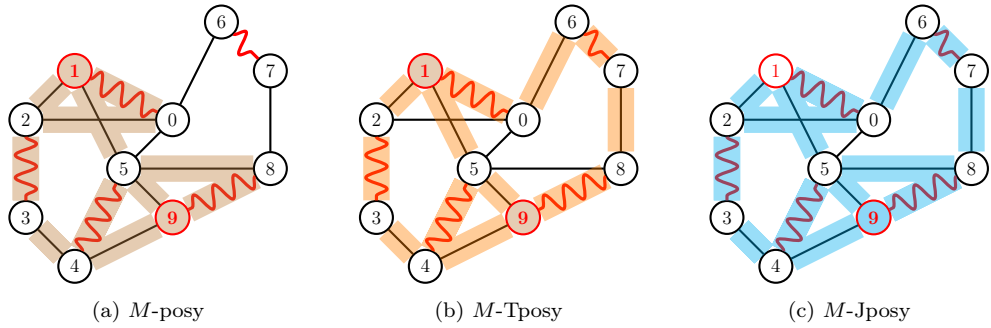

%
%
%
%
%
	
	In \cref{fig:Jposy}, a maximum matching \( M \) of the graph \( G \) is shown in red. With respect to this matching, there are several \( M \)-Jposies in \( G \). The one highlighted in cyan in \cref{fig:Jposy} is formed by the blossom \( 9,5,4,9 \), whose base is vertex \( 9 \) (appearing twice), and the \( M \)-alternating walk:
	\[
	9,8,5,4,3,2,0,1,2,3,4,5,1,0,6,7,8,9.
	\]
	
	Note that any odd cycle appearing in the \( M \)-alternating walk of an \( M \)-Jflower or an \( M \)-Jposy is a blossom with respect to \( M \).
	
	The next results are useful observations about Jposies.
	 
	\begin{lemma}\label[lemma]{triviallemma}
		Let \( G \) be a graph and let \( M \) be a maximum matching of \( G \). If \( H \) is an \( M \)-Jposy of \( G \), then for any \( u, v \in V(H) \), there exists both an \( M \)-\( mm \)-alternating walk and an \( M \)-\( nn \)-alternating walk from \( u \) to \( v \).
	\end{lemma}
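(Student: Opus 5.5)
The idea is to build one closed $M$-alternating walk that passes through every vertex of $H$, and then use the blossoms as ``turning points'' that let us flip the type of the first and last edge. Let $H$ consist of blossoms $B_1,B_2$ with bases $b_1,b_2$, joined by the odd $mm$-alternating walk $W$ from $b_1$ to $b_2$.

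First I record the local fact at a base. Let $b$ be the base of a blossom $B$ of length $2k+1$. Neither of the two cycle edges at $b$ lies in $M$. Hence traversing $B$ once, starting and ending at $b$, is an $nn$-alternating closed walk. In particular, a walk that arrives at $b$ through a matched edge can go once around $B$, and then leave $b$ through a matched edge again, and the concatenation stays alternating. In other words, at a base we may \emph{reverse direction} along a matched edge. Since $W$ is $mm$, its first and last edges are matched, so this applies at both $b_1$ and $b_2$.

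Next I consider the closed walk
\[
C:\quad b_1 \xrightarrow{B_1} b_1 \xrightarrow{W} b_2 \xrightarrow{B_2} b_2 \xrightarrow{W^{-1}} b_1 .
\]
Checking the transitions shows that $C$ is alternating at every consecutive pair, including the wrap-around at $b_1$. The possible transitions are: unmatched blossom edge followed by matched edge of $W$; matched edge of $W$ followed by unmatched blossom edge; and the same pattern at $b_2$. This works even if $B_1=B_2$, or if $W$ meets the blossoms, because only walks are required. Every vertex of $H$ appears on $C$. At each occurrence of a vertex $x$ on $C$, one incident edge of $C$ is matched and the other is unmatched. So from $x$ we can leave along $C$ either forwards or backwards, and this lets us choose whether the first edge is in $M$ or not.

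Now fix $u,v\in V(H)$ and a desired type ($mm$ or $nn$). Start at an occurrence of $u$ and leave in the direction whose first edge has the prescribed type. Follow $C$ until an occurrence of $v$; this gives an alternating walk $P$ from $u$ to $v$ with the correct first edge. If its last edge already has the prescribed type, we are done. Otherwise, extend $P$ at $v$ as follows. Continue along $C$ to the next base $b_i$, arriving through a matched edge of $W$; if the traversal would arrive via the blossom instead, first complete the blossom loop. Go once around $B_i$, then retrace the segment from $b_i$ back to $v$ in reverse. The reversed segment consists of the same edges in reverse order, so it is alternating. It arrives at $v$ through the edge with which it left $v$, and the parity of that edge is opposite to the last edge of $P$ (it is the continuation of $P$ along $C$). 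The turning point at $b_i$ is legal by the first paragraph. So the extended walk ends with the desired type.

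The main obstacle is the parity bookkeeping in this last step. I must check that continuing along $C$ from $v$ reaches a base through a matched edge of $W$, so that the blossom loop is a valid turn; if it instead enters through the blossom, continuing around the blossom does exactly this. I must also handle the degenerate case $u=v$, where I take $P$ to be the closed walk from $u$ around $C$, or a turn at a base, so that the walk is non-trivial when a type is specified. I would settle these by a short case check on whether $v$ is reached inside a blossom or on $W$.
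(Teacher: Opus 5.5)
The paper gives no proof of this lemma; it is listed without argument among the ``useful observations'' after the definition of a Jposy. There is therefore no route to compare against. Your argument is correct and supplies the missing proof.

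The essential points check out:
\begin{itemize}
\item The closed walk $C=B_1\,W\,B_2\,W^{-1}$ has even length $|B_1|+|B_2|+2|W|$. It is alternating cyclically, because every junction pairs a matched end edge of $W$ (necessarily $b_iM(b_i)$) with an unmatched blossom edge at $b_i$.
\item Hence each occurrence of a vertex on $C$ has one matched and one unmatched incident edge of $C$.
\item The turn at a base entered through $b_iM(b_i)$ is therefore legitimate.
\end{itemize}

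The case analysis you defer is short if you phrase the repair uniformly. Suppose $P$ stops at position $p$ of $C$ with the wrong last edge. (If $P$ runs against $C$, use $C^{-1}=W\,B_2^{-1}\,W^{-1}\,B_1^{-1}$, which has the same shape.) Let $q$ be the first position at or after $p$, within one lap, at which a copy of $W$ or $W^{-1}$ ends. Then take $P$, followed by $C$ from $p$ to $q$, then the blossom loop at $c_q$, then $C$ from $q$ back to $p$ in reverse. This walk is alternating and ends with the edge $c_pc_{p+1}$, whose type is opposite to the bad one. If $q=p$, the walk simply ends with the unmatched last edge of the loop, which is what is needed, since the bad edge was then the matched end of $W$. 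This one construction covers $v$ inside a blossom, $v$ a base, and $u=v$ (take a full lap first).

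You should state explicitly that you read the connecting walk as $mm$, whereas the paper's definition of a Jposy says only ``odd-length $M$-alternating walk''. This reading is necessary, not merely convenient. If an $nn$ walk were allowed, the triangle $r,a,c$ with $M=\{ac\}$ and $W=r,a,c,r$ would be a Jposy. Yet no $mm$-alternating walk can start at the exposed vertex $r$. Your interpretation agrees with the posy definition and with how Jposies are used in the proof of \cref{lem:noKE1}.
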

	
	\begin{lemma}\label[lemma]{adfw}
		Let \( G \) be a graph and let \( M \) be a maximum matching of \( G \). If \( H \) is an \( M \)-Jposy of \( G \), and \( u, v \in V(H) \) such that there exists an \( M \)-\( nn \)-alternating walk from \( u \) to \( v \), then either there exists an \( M \)-\( nn \)-alternating path from \( u \) to \( v \), or there exists an \( M \)-\( nn \)-alternating walk from \( u \) to \( v \) whose first self-intersection forms an \( M \)-blossom.
	\end{lemma}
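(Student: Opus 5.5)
We argue by taking a shortest \(M\)-\(nn\)-alternating walk and examining its first repeated vertex. Let \(W = w_0w_1\cdots w_k\) be an \(M\)-\(nn\)-alternating walk from \(u=w_0\) to \(v=w_k\) of minimum length among all such walks. If \(W\) has no repeated vertex, it is an \(M\)-\(nn\)-alternating path and we are done.

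Otherwise choose the smallest index \(j\) such that \(w_j = w_i\) for some \(i<j\); this is the first self-intersection. Then \(C = w_iw_{i+1}\cdots w_j\) is a closed walk whose internal vertices are distinct and distinct from \(w_i\), so \(C\) is a cycle. Its length \(j-i\) is at least 3, since \(j-i\le 2\) would force a repeated edge, which is impossible for consecutive edges of an alternating walk. The argument now splits on the parity of \(j-i\).

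\emph{Even case.} If \(j-i\) is even, the edges \(w_iw_{i+1}\) and \(w_{j-1}w_j\) have opposite \(M\)-status, because the walk alternates along an odd number of edge-transitions. Deleting the segment \(w_{i+1}\cdots w_j\) gives the walk \(w_0\cdots w_i w_{j+1}\cdots w_k\). The edge entering \(w_i\) is \(w_{i-1}w_i\), and the edge leaving \(w_j\) is \(w_jw_{j+1}\). The status of \(w_jw_{j+1}\) is opposite to that of \(w_{j-1}w_j\), hence equal to that of \(w_iw_{i+1}\), hence opposite to that of \(w_{i-1}w_i\). So the shortened walk is still alternating. Its first and last edges are unchanged, so it is still \(nn\). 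The degenerate boundary cases \(i=0\) or \(j=k\) need a separate check: removing the cycle there could change the first or last edge, so one should verify directly that the first and last edges remain non-matching. The resulting walk is strictly shorter, contradicting the minimality of \(W\). Hence the first self-intersection of a shortest walk is never an even cycle.

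\emph{Odd case.} If \(j-i\) is odd, say \(2r+1\), the cycle \(C\) alternates along its \(2r+1\) edges, with the two edges at \(w_i\) having the same status. If both are non-matching, \(C\) contains exactly \(r\) matched edges with base \(w_i\), so it is an \(M\)-blossom; this is what the paper's earlier remark predicts for odd cycles in alternating walks. If both were matched, \(w_i\) would be incident to two matching edges, which is impossible unless they are the same edge, and that is excluded since \(C\) is a cycle. So \(C\) is an \(M\)-blossom, and \(W\) is the required walk whose first self-intersection forms a blossom.

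The main obstacle is the even case at the boundary, where \(i=0\) and the removed cycle touches the start of the walk, together with correctly tracking the matched/unmatched status of the edges around the shortcut. If the boundary case \(i=0\) ever broke the \(nn\) property, I would instead use \cref{triviallemma} to re-route within \(H\). In fact the parity computation above shows the first edge \(w_0w_{j+1}\) inherits the status of \(w_jw_{j+1}\), which equals that of \(w_0w_1\), namely non-matching. So the shortcut should work uniformly, and the proof would conclude with a short minimality argument.
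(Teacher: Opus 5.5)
Your proof is correct and takes the same approach as the paper, which iteratively deletes even closed segments until the first self-intersection is an odd cycle; choosing a shortest $nn$-alternating walk packages that reduction as a minimality argument, and you also supply the alternation check at the splice point and the argument that both cycle edges at the base cannot be matched. The one loose end is the even-case boundary $j=k$, which you flag but do not write out: the same parity computation shows the new last edge $w_{i-1}w_i$ is unmatched, and $i=0$, $j=k$ cannot occur there because the first and last edges of $W$ would then have opposite status.
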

	
	\begin{proof}
		If there is no \( M \)-\( nn \)-alternating path from \( u \) to \( v \), consider the \( M \)-\( nn \)-alternating walk from \( u \) to \( v \). Iteratively remove all even cycles from the walk until the first self-intersection forms an odd cycle. This odd cycle is an \( M \)-blossom, as required.
	\end{proof}

	%
	%
	%
	%
	%
	%
	
	\section{Jposies mark the same vertices as posies}
	
	Posies are important because of the following result:
	\begin{theorem}[Sterboul's Characterization of K\H{o}nig-Egerv\'ary graphs \cite{sterboul1979characterization}] \label{theo:Sterboul}
		For a graph \( G \), the following properties are equivalent:
		\begin{enumerate}
			\item \( G \) is not a K\H{o}nig--Egerv\'{a}ry graph;
			\item if \( M\in \mathcal{M}(G) \), then there exists a \(M\)-flower or a \(M\)-posy;
			\item there exists a flower or a posy in \( G \) relative to some maximum matching of \( G \).
		\end{enumerate}
	\end{theorem}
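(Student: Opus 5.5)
We prove the cycle $(1)\Rightarrow(2)\Rightarrow(3)\Rightarrow(1)$. Only $(1)\Rightarrow(2)$ has real content.

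\textbf{$(3)\Rightarrow(1)$.} We argue by contraposition. Suppose $G$ is K\H{o}nig--Egerv\'ary, and let $C$ be a minimum vertex cover with $|C|=\mu(G)=|M|$. Then every edge of $M$ has exactly one endpoint in $C$, and $I=V(G)\setminus C$ is a maximum independent set.

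Suppose first there is an $M$-blossom of length $2k+1$. It has $k$ matched edges, which contribute exactly $k$ vertices of $C$. Its base $b$ is matched outside the blossom (or unmatched). The base must lie in $I$: otherwise the cycle would contain $k+1$ cover vertices but only $k$ vertices of $I$, so two $I$-vertices would be adjacent on the cycle. Tracing parity around the odd cycle then shows that the two cycle-neighbours of $b$ lie in $C$ while $b\notin C$, so the cycle has $k$ cover vertices. Counting edges on an odd cycle of length $2k+1$ with $k$ cover vertices and $k+1$ independent vertices forces two consecutive $I$-vertices, which is a contradiction. Hence a blossom cannot exist unless its base lies in $C$ and is matched outside the blossom.

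For a flower, the base is joined by an even alternating stem to an unsaturated vertex $w$. Since $w$ is unsaturated and $|C|=|M|$, we have $w\in I$. Following the stem, unmatched edges force their other endpoint into $C$ and matched edges alternate membership, so the base ends up in $I$, a contradiction.

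For a posy, the odd $mm$-path between the two bases alternates, and the same alternation forces one base into $I$, again a contradiction.

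\textbf{$(2)\Rightarrow(3)$.} This is immediate, since $\mathcal{M}(G)\neq\emptyset$.

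\textbf{$(1)\Rightarrow(2)$.} Fix $M$ and suppose there is no $M$-flower and no $M$-posy. The aim is to build a vertex cover of size $|M|$, so that $G$ is K\H{o}nig--Egerv\'ary. We use an Edmonds-type labelling.
\begin{enumerate}
\item Let $U$ be the set of $M$-unsaturated vertices. Let $\mathrm{Even}$ be the set of vertices reachable from $U$ by an even alternating path, and $\mathrm{Odd}$ the set reachable by an odd alternating path.
\item Since there is no flower and no augmenting path, $\mathrm{Even}\cap\mathrm{Odd}=\emptyset$, and no edge joins two $\mathrm{Even}$ vertices. Both facts come from Edmonds' blossom analysis: such an edge would close a blossom with a stem, that is, a flower.
\item Put $\mathrm{Odd}$ into the cover. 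Every odd vertex is matched to an even vertex, so this uses $|\mathrm{Odd}|$ edges of $M$.
\item On the remaining graph $G'$, spanned by the matched edges untouched by the search, $M$ is perfect. Choose one endpoint from each edge of $M$ so that the chosen set covers all edges of $G'$. This is a 2-SAT-type choice with the constraints ``$x$ or $y$'' for each edge $xy$ and ``not both endpoints'' for each matched edge.
\end{enumerate}

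\emph{Main obstacle: the 2-SAT step.} The 2-SAT instance is unsatisfiable exactly when some vertex $x$ has implication chains $x\to\lnot x\to x$. An implication chain is an $mm$-alternating walk, so unsatisfiability means an odd alternating closed walk returning to $x$ in both phases. Shortening such a walk to its first self-intersection gives a blossom at each end, joined by an odd $mm$-alternating segment. Choosing the minimal such walks makes that segment a path, which yields two blossoms joined as in a posy. This contradicts the assumption, so the instance is satisfiable and the desired cover exists.

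The delicate point is showing that the shortened walk really produces two distinct blossoms whose bases are the endpoints of a genuine alternating path. We handle it by walk simplification in the spirit of \cref{adfw}: repeatedly excise even closed subwalks and keep the first odd self-intersection. Finally, edges between $G'$ and the even vertices are covered automatically, because the even vertices' neighbours are odd, and those are in the cover.
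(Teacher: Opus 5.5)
The paper gives no proof of this statement. It quotes it from Sterboul and uses it as a black box, for instance in \cref{lem:noKE1}. Your argument is correct in outline and takes a genuinely different, self-contained route, essentially the classical Gallai--Edmonds plus Gavril 2-SAT proof. An Edmonds alternating forest handles the unsaturated part. The perfectly matched remainder $G'$ is K\H{o}nig--Egerv\'ary exactly when the clauses $x\lor y$ ($xy\in E(G')$) and $\lnot x\lor\lnot y$ ($xy\in M$) are satisfiable. An unsatisfiable certificate is then turned into a posy. Compared with the citation, this proves the ``every maximum matching'' form (2) directly and is algorithmic. It also fits the paper well: the certificate is precisely an $M$-Jposy, namely blossoms $B_a,B_b$ reached from $a$ and $b=M(a)$ by even $nm$-paths $P_a,P_b$, joined by the odd $mm$-walk that reverses $P_a$, crosses $ab$ and follows $P_b$. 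Your last step is then exactly the Jposy-to-posy reduction of \cref{lem:noKE1}, done without appealing to the theorem itself.

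Three spots need repair, none fatal.

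\emph{First}, in $(3)\Rightarrow(1)$ the opening sentence on blossoms has $I$ and $C$ swapped, and its justification is false: $k+1$ cover vertices and $k$ vertices of $I$ on a $(2k+1)$-cycle is perfectly consistent. The rest of your paragraph actually refutes $b\in I$. Say instead: the $k$ matched edges put exactly $k$ cycle vertices in $I$, and an independent set of a $(2k+1)$-cycle has at most $k$ vertices, so $b\in C$. Since $|C|=|M|$ forces $C$ to be one endpoint of each matched edge, $b$ is matched, necessarily outside the blossom.

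\emph{Second}, with $\mathrm{Even}$ and $\mathrm{Odd}$ defined by arbitrary alternating paths, ``such an edge would close a blossom with a stem'' is not immediate, because the two paths may interleave. Grow a maximal alternating forest from $U$. An outer--outer edge gives an augmenting path if it joins different trees, or a flower if it lies in one tree, with stem the tree path to the lowest common ancestor. Hence every alternating walk from $U$ alternates outer/inner, so $\mathrm{Even}$ and $\mathrm{Odd}$ are exactly the outer and inner vertices.

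\emph{Third}, ``choosing minimal walks makes the segment a path'' is unsupported, and for a fixed certificate it fails. Take triangles $abc$ and $def$ joined by $cd$, with $M=\{ab,cd,ef\}$. Both $P_a$ and $P_b$ must pass through $c$ and $d$, and the segment is the closed walk $d,c,a,b,c,d$. The simplification you mention is what does the work:
\begin{itemize}
\item Excising even closed subwalks preserves alternation and the matched end edges.
\item At the first odd repetition $r_j=r_i$, both cycle edges at $r_i$ are unmatched, since two matched edges at $r_i$ would coincide and give an earlier repetition. So the cycle is a blossom $B'$.
\item $i\ge 1$ because the walk starts with a matched edge, so $r_0\cdots r_i$ is an odd $mm$-path.
\end{itemize}
The resulting posy may be $(B_a,B')$ rather than $(B_a,B_b)$; in the example it is the two triangles plus $cd$. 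It qualifies only because the paper's posy definition lets the path meet the blossoms internally.
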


\begin{lemma}\label[lemma]{lem:tech1}
Let \( G \) be a graph and \( M \) a matching of \( G \). Every closed \( M \)-alternating walk in \( G \) contains an odd cycle, which is an \( M \)-blossom of \( G \).
\end{lemma}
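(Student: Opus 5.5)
Throughout, I read ``closed \(M\)-alternating walk'' as the situation that actually arises in Jflowers and Jposies. There, a walk \(W=v_0v_1\cdots v_k\) with \(v_k=v_0\) alternates at every internal vertex \(v_1,\dots,v_{k-1}\), and \(k\) is odd, so the first and last edges have the same type at \(v_0\). This odd-length hypothesis is necessary and should be stated explicitly: an even \(M\)-alternating cycle is a closed alternating walk that contains no odd cycle at all. Under this reading, the plan is an induction on the length \(k\) that repeatedly cuts out even cycles until an odd cycle appears.

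\textbf{Step 1: extract a cycle.} Treat \(W\) as a linear sequence. Among all pairs \(i<j\) with \(v_i=v_j\) (the pair \((0,k)\) is one), choose one with \(j-i\) minimal. Then \(C=v_iv_{i+1}\cdots v_j\) has no repeated vertex apart from \(v_i=v_j\). Its length is not \(1\), since the graph has no loops. Its length is not \(2\) either: \(C=v_i u v_i\) would use the same edge twice consecutively, yet alternation at \(u\) requires one of these two traversals to lie in \(M\) and the other not. Hence \(C\) is a genuine cycle.

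\textbf{Step 2: \(C\) is odd.} The walk alternates at every internal vertex of \(C\), so the edges of \(C\), read from \(v_i\) to \(v_j\), alternate. The first and last edges therefore have the same type, and both are incident to \(v_i\). They cannot both be in \(M\), since they are distinct edges at the same vertex and \(M\) is a matching. So both are outside \(M\), and \(C\) has length \(2r+1\) with exactly \(r\) edges of \(M\). This makes \(C\) an \(M\)-blossom with base \(v_i\), and we are done.

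\textbf{Step 3: \(C\) is even.} In this case \((i,j)\neq(0,k)\), because \(k\) is odd. Replace \(W\) by \(W'=v_0\cdots v_i v_{j+1}\cdots v_k\), which is still closed and still of odd length, and apply induction. The only thing to check, and the main point of the argument, is that alternation survives at the junction \(v_i\) when \(0<i\) and \(j<k\).
\begin{itemize}
\item Since \(C\) is even and alternating, its first edge \(v_iv_{i+1}\) and last edge \(v_{j-1}v_j\) have opposite types.
\item Alternation of \(W\) at \(v_i\) makes \(v_{i-1}v_i\) opposite to \(v_iv_{i+1}\).
\item Alternation of \(W\) at \(v_j\) makes \(v_jv_{j+1}\) opposite to \(v_{j-1}v_j\).
\end{itemize}
Combining these, \(v_{i-1}v_i\) and \(v_jv_{j+1}\) have opposite types, so \(W'\) alternates at the new junction.

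If instead \(i=0\) or \(j=k\), the removed piece sits at the closing vertex \(v_0=v_k\). The same parity bookkeeping shows that the two end edges of \(W'\) still have equal type, so \(W'\) remains an odd closed walk of the same kind.

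Since \(W'\) is strictly shorter, the induction terminates, necessarily at Step 2. The odd cycle it produces is a subwalk of the original \(W\), which gives the required \(M\)-blossom.
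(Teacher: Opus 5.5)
Your argument is correct and follows the paper's own idea: discard even cycles, which preserve the alternation pattern, until parity forces an odd cycle, then use alternation and the matching property to show that cycle is a blossom. You carry this out carefully, including the junction checks the paper omits. You are also right that the odd-length hypothesis (equivalently, first and last edges of the same type) must be added, since an even $M$-alternating cycle refutes the lemma as literally stated; the paper's proof uses this tacitly when it says the walk ``cannot return to $w_0$ using an edge from $M$,'' and the hypothesis holds in the lemma's only application, the closed $mm$-walk of a single-blossom Jposy.
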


\begin{proof}
Let \( W \) be a closed \( M \)-alternating walk in \( G \) starting at a vertex \( w_0 \). Suppose, for contradiction, that \( W \) contains only even cycles. Then the walk cannot return to \( w_0 \) using an edge from \( M \), as each even cycle preserves parity and cannot close the alternating pattern. This contradicts the assumption that \( W \) is a closed \( M \)-alternating walk. Therefore, \( W \) must contain at least one odd cycle. Since it is an \( M \)-alternating walk and the cycle has an odd number of edges with exactly half of them in \( M \), it forms an \( M \)-blossom.
\end{proof}

\begin{lemma}\label[lemma]{lem:noKE1}
	If \( G \) contains a Jposy relative to a maximum matching \( M \), then \( G \) is not a K\H{o}nig–Egerv\'{a}ry graph.
\end{lemma}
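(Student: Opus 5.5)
The plan is to argue directly, by a counting argument with a minimum vertex cover; this avoids converting the Jposy into a flower or posy. Suppose, for contradiction, that \(G\) is K\H{o}nig--Egerv\'ary. Then \(\tau(G)=\mu(G)=|M|\), so there is a vertex cover \(C\) with \(|C|=|M|\). Every edge of \(M\) must contain at least one vertex of \(C\). Since the edges of \(M\) are disjoint and \(|C|=|M|\), every edge of \(M\) contains \emph{exactly} one vertex of \(C\), and \(C\) contains no \(M\)-unsaturated vertex. This gives the key local rule, used throughout, for a vertex \(x\) and an edge \(xy\):
\begin{itemize}
\item if \(xy\in M\), then exactly one of \(x,y\) lies in \(C\);
\item if \(xy\notin M\), then at least one of \(x,y\) lies in \(C\).
\end{itemize}

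First I show that the base \(b\) of any \(M\)-blossom \(B=b x_1 x_2\cdots x_{2k} b\) lies in \(C\). The edges \(x_1x_2,\dots,x_{2k-1}x_{2k}\) are in \(M\), and \(bx_1, x_{2k}b\) are not. Suppose \(b\notin C\). Then \(x_1\in C\), because the edge \(bx_1\) must be covered. Hence \(x_2\notin C\), because \(x_1x_2\in M\) has exactly one endpoint in \(C\). The non-matching edge \(x_2x_3\) then forces \(x_3\in C\), and so on. Inductively \(x_{2i-1}\in C\) and \(x_{2i}\notin C\); in particular \(x_{2k}\notin C\). But then the edge \(x_{2k}b\) has no endpoint in \(C\), a contradiction. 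So \(b\in C\). (If \(b\) is \(M\)-unsaturated this already contradicts \(C\) avoiding unsaturated vertices, but the general statement is what we need.)

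Next I propagate membership in \(C\) along an \(M\)-alternating walk. Let \(W=w_0w_1\cdots w_{2m+1}\) be the odd-length alternating walk of the Jposy, with \(w_0=b_1\) and \(w_{2m+1}=b_2\) the bases of the two blossoms. The walk has odd length and its end edges have the same type, so it is \(mm\)-alternating. The point to verify is that the end edges cannot both be non-matching: \(b_1\) is the base, so its \(M\)-partner lies outside \(B_1\), or \(b_1\) is unsaturated. In the latter case the walk must start with a non-\(M\) edge, and I would treat that case separately, or note that an unsaturated \(b_1\) already contradicts \(b_1\in C\). In either case the hypothesis that \(W\) has odd length pins down its type, and I will handle both types directly.

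\emph{Case \(mm\).} We have \(b_1\in C\) and \(w_0w_1\in M\), so \(w_1\notin C\). The edge \(w_1w_2\notin M\) then gives \(w_2\in C\). Inductively \(w_{\text{even}}\in C\) and \(w_{\text{odd}}\notin C\). This induction is valid even with repeated vertices, since it is a statement about positions in the walk, and a contradiction arising at a repeated vertex only helps. It yields \(w_{2m+1}=b_2\notin C\), contradicting the first step.

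\emph{Case \(nn\).} The first and last edges are not in \(M\), so both bases must be unsaturated (otherwise the walk could not alternate into them correctly). Unsaturated vertices are not in \(C\), which contradicts \(b_1\in C\) directly.

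The main obstacle is bookkeeping the alternation type at the endpoints, and the degenerate case where the two blossoms coincide. In that case the same argument applies verbatim, since it only used \(b_1,b_2\in C\) and the parity of \(W\).
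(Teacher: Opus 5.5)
Your proof is correct for the Jposies the paper intends (walk $mm$-alternating), and it takes a different route from the paper. The paper argues structurally. It discards even cycles from the Jposy walk until it either reaches the second blossom base or finds an odd cycle that serves as a second blossom. When the two blossoms coincide, it uses \cref{lem:tech1} to extract a second blossom from the closed walk. It then applies \cref{theo:Sterboul} to the resulting $M$-posy.

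You skip both the pruning and Sterboul's theorem by running the standard vertex-cover parity argument directly on the walk:
\begin{itemize}
\item a cover of size $|M|$ meets every $M$-edge exactly once;
\item each blossom base is forced into $C$;
\item an odd $mm$ walk forces its two endpoints to opposite sides of $C$.
\end{itemize}
Your induction runs over positions in the walk rather than over vertices, so repeated vertices and coincident blossoms cost nothing. You also avoid having to check that the pruning preserves alternation and really produces two blossoms joined by an $mm$ path, which the paper only sketches. In exchange, the paper's route exhibits an actual $M$-posy inside the Jposy, which fits its program of converting generalized configurations into classical ones. Your argument yields only the non-K\H{o}nig--Egerv\'ary conclusion.

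Your dismissal of the $nn$ case is wrong and should be replaced. A matched base does not force the walk to leave it along its $M$-edge. In fact, if $nn$ walks were allowed, the lemma would be false. Take triangles $b_1x_1x_2$ and $b_2y_1y_2$, pendant vertices $c_1,c_2$ attached to $b_1,b_2$, the edge $b_1b_2$, and $M=\{x_1x_2,y_1y_2,b_1c_1,b_2c_2\}$. The one-edge walk $b_1b_2$ is an odd $nn$-alternating walk joining the two blossom bases. Yet $\{b_1,b_2,x_2,y_2\}$ is a vertex cover of size $4=\mu(G)$, so $G$ is K\H{o}nig--Egerv\'ary.

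The correct justification is that the walk of a Jposy must be $mm$-alternating, exactly as for posies and as the paper's own proof assumes. With that requirement, your $mm$ case is the whole proof. The unsaturated-base sub-case never arises, because the walk begins and ends with $M$-edges at the two bases.
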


\begin{proof}
	Let \( M \) be a maximum matching of \( G \), and suppose that \( G \) contains an \( M \)-Jposy.
	
	First, assume the \( M \)-Jposy contains two distinct \( M \)-blossoms connected by an \( M \)-\(mm\)-alternating walk. Starting at the base of one blossom, traverse the \( M \)-alternating walk, discarding all even cycles encountered along the way. This process must either lead to the base of the second blossom, or result in an odd cycle within the walk.
	
	In either case, this sub-configuration corresponds to an \( M \)-posy in \( G \). By \cref{theo:Sterboul}, \( G \) is not a K\H{o}nig–Egerv\'{a}ry graph.
	
	Now consider the case where the \( M \)-Jposy contains only one \( M \)-blossom. The associated \( M \)-\(mm\)-alternating walk begins and ends at the base of the blossom, forming a closed walk. By \cref{lem:tech1}, any such closed \( M \)-alternating walk must contain an odd cycle, which is itself an \( M \)-blossom. Therefore, we can extract a second \( M \)-blossom from the walk, reducing to the previous case.
	
	In both situations, \( G \) contains an \( M \)-posy. Therefore, by \cref{theo:Sterboul}, \(G\) is not a K\H{o}nig–Egerv\'{a}ry graph.
\end{proof}

	%
	%
	%
	
We now state Theorem 7 from \cite{bonomo2013forbidden}, restated in our notation.

\begin{theorem}[\cite{bonomo2013forbidden}]\label{safe}
	A graph \( G \) is a K\H{o}nig–Egerv\'ary graph if and only if it contains no \( M \)-Tposy and no \( M \)-flower, for any \( M \in \mathcal{M}(G) \).
\end{theorem}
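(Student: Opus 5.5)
Since \cref{lem:noKE1} already shows that an \(M\)-Jposy forces \(G\) to be non-K\H{o}nig--Egerv\'ary, and a Tposy is in particular a posy, I would not reprove the theorem from scratch. Instead I would derive it from Sterboul's characterization (\cref{theo:Sterboul}) by a reduction: every \(M\)-posy can be turned into an \(M\)-Tposy, possibly with a different pair of blossoms. The reverse direction is immediate. If \(G\) contains an \(M\)-Tposy or an \(M\)-flower, then it contains an \(M\)-posy or an \(M\)-flower, so \(G\) is not K\H{o}nig--Egerv\'ary by \cref{theo:Sterboul}. For the forward direction, assume \(G\) is not K\H{o}nig--Egerv\'ary and fix \(M\in\mathcal{M}(G)\). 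By \cref{theo:Sterboul} there is an \(M\)-flower or an \(M\)-posy. If there is a flower we are done. So it remains to show that the existence of an \(M\)-posy implies the existence of an \(M\)-Tposy, or of an \(M\)-flower, for the same \(M\).

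For this reduction I would choose, among all \(M\)-posies, one that minimizes \(|B_1|+|B_2|+|P|\), where \(B_1,B_2\) are the blossoms with bases \(b_1,b_2\) and \(P\) is the odd \(mm\)-alternating path from \(b_1\) to \(b_2\). Suppose \(P\) meets \(B_1\cup B_2\) in an interior vertex, and let \(x\) be the first such vertex after \(b_1\) along \(P\), say \(x\in B_2\). The cycle \(B_2\) splits at \(x\) into two arcs from \(x\) to \(b_2\). Because \(B_2\) is a blossom with base \(b_2\), exactly one of these arcs continues alternation correctly from the edge of \(P\) entering \(x\), and it ends with the matched edge at \(b_2\) when that edge is needed. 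Concatenating the initial segment \(b_1Px\) with this arc gives an \(mm\)-alternating path from \(b_1\) to \(b_2\). If \(b_1Px\) avoids \(B_2\) except at \(x\), the new path is internally disjoint from \(B_2\) and strictly shorter in the relevant sense. The same argument handles intersections with \(B_1\), with the roles reversed, or by first truncating at the last exit from \(B_1\).

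The delicate cases are the ones where the parity at \(x\) goes wrong. This happens when the segment arrives at \(x\) via an edge whose matched status clashes with both arcs, or when \(B_1\) and \(B_2\) share vertices. In these cases the union of \(b_1Px\) with an arc of \(B_2\) closes an odd cycle, and that cycle is itself an \(M\)-blossom (compare \cref{lem:tech1}). Together with \(B_1\) and the remaining subpath, this new blossom forms a smaller posy, which contradicts minimality. If \(b_1\) or \(b_2\) is \(M\)-unsaturated, which can happen for a blossom base, then the configuration already contains an \(M\)-flower with a trivial stem. I expect this case analysis to be the main obstacle: I must verify that the new odd cycle has the right base and that the leftover path is still odd and \(mm\)-alternating. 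I would organize it by the parity of the position of \(x\) on \(P\) and by whether the \(P\)-edge entering \(x\) lies in \(M\). As a fallback, I would appeal to the known fact from \cite{bonomo2013forbidden} that minimal non-K\H{o}nig--Egerv\'ary configurations are even subdivisions of barbells or \(K_4\), and these are exactly the Tposies and flowers.
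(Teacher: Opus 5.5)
The paper gives no proof of this statement; it only cites \cite{bonomo2013forbidden}. Your plan is sound: derive the theorem from \cref{theo:Sterboul} by showing that a posy minimizing $|B_1|+|B_2|+|P|$ is already an $M$-Tposy for the same $M$, and that reduction is in fact true. However, the step you describe for carrying it out fails.

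\textbf{The concatenation cannot work.} Both edges of $B_2$ at $b_2$ are non-matching, because the matching edge at $b_2$ is the last edge of $P$. So every arc of $B_2$ reaches $b_2$ through a non-matching edge, and concatenating $b_1Px$ with an arc of $B_2$ can never give an $mm$-alternating path ending at $b_2$. Concretely, $x$ is entered by a non-matching edge unless $x=M(b_1)$ with $b_1\in V(B_2)$. In that generic case the only arc that preserves alternation is the one starting with $xM(x)$, and the result is an even $mn$-path. The exceptional case is symmetric. So your ``easy'' case never occurs, and everything falls into the ``delicate'' case.

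\textbf{The delicate case is not resolved either.} The union of $b_1Px$ with an arc of $B_2$ is a path from $b_1$ to $b_2$, not a cycle, so no odd cycle is closed. Moreover, with $x$ chosen as the \emph{first} intersection, the remainder $xPb_2$ may re-enter $B_2$ many times, so it cannot be combined with an arc into a simple cycle. Truncating at the last exit from $B_1$ has the same defect: the truncated path starts with a non-matching edge. The fallback to \cite{bonomo2013forbidden} just cites the statement being proved.

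\textbf{How to repair it.} Choose the intersection vertex differently, and build a new blossom instead of rerouting into $b_2$.

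\emph{$B_2$ side.} If the interior of $P$ meets $B_2$, let $x$ be the \emph{last} interior vertex of $P$ on $B_2$. Its successor on $P$ is not $b_2$, since otherwise $x=M(b_2)\notin V(B_2)$; by the choice of $x$ the successor also lies outside $B_2$. Since $x\neq b_2$, the edge leaving $x$ is not in $M$, so $P$ reaches $x$ via $M(x)x$. Let $A$ be the arc of $B_2$ from $x$ to $b_2$ avoiding $M(x)$; it is an odd $nn$-alternating path.
\begin{itemize}
\item $B_3=xPb_2\cup A$ is a simple odd cycle, alternating except at $x$, and $M(x)\notin V(B_3)$. So $B_3$ is an $M$-blossom with base $x$.
\item $b_1Px$ is an odd $mm$-path, so $(B_1,b_1Px,B_3)$ is an $M$-posy.
\item Your measure drops by the length of the other arc of $B_2$.
\end{itemize}

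\emph{$B_1$ side.} Symmetrically, if the interior of $P$ meets $B_1$, let $y$ be the \emph{first} interior vertex of $P$ on $B_1$. Then $P$ enters $y$ by a non-matching edge and continues with $yM(y)$. Let $A'$ be the arc of $B_1$ from $y$ to $b_1$ avoiding $M(y)$. Then $b_1Py\cup A'$ is a blossom with base $y$, and together with $yPb_2$ and $B_2$ it forms a strictly smaller posy.

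Hence a minimal posy is a Tposy for the same $M$, which completes your reduction.

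Your remark about unsaturated bases is vacuous: both bases of a posy are covered by the end edges of $P$.
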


A graph \( G \) is called a \emph{Jposy-graph} if there exists a perfect matching \( M \in \mathcal{M}(G) \) such that \( G \) contains an \( M \)-Jposy that uses all the vertices and all the edges of \( G \). In a similar way we define \emph{posy-graph}, \emph{Tposy-graph}, \emph{Jflower-graph} and \emph{flower-graph}. Note that Tposy are even subdivisions of Barbell graphs or of \(K_{4}\), \cite{bonomo2013forbidden}.

Let $G$ be a graph and let $P=uPv$ be an odd path from \(u\) to \(v\) such that \(V(G)\cap V(P) = \{u,v\}\). Note that \(u\) and \(v\) can be the same vertex of \(G\). With $G+P$ we denote the graph such that $V(G+P) = V(G) \cup V(P)$ and $E(G+P) \cap E(P)\cup E(P) $. We say that \(P\) is and \emph{odd ear} of \(G\).

\begin{lemma}\label[lemma]{Lem_T}
	If $G$ is an Tposy-graph and $P=uPv$ an odd ear of \(G\). Then every vertex of $G+P$ belongs to an Tposy of \(G+P\).
\end{lemma}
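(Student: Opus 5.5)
The plan is to fix a perfect matching \(M\) of \(G\) for which \(G\) itself is an \(M\)-Tposy, with blossoms \(B_1,B_2\), bases \(b_1,b_2\), and connecting \(mm\)-path \(Q\). Write \(P=u\,p_1p_2\cdots p_{2k}\,v\). We extend \(M\) to \(M'=M\cup\{p_1p_2,p_3p_4,\dots,p_{2k-1}p_{2k}\}\). Then \(M'\) is a perfect matching of \(G+P\), hence maximum, and \(P\) is \(M'\)-\(nn\)-alternating. Every vertex of \(G\) still lies in the Tposy \(G\subseteq G+P\) relative to \(M'\), because blossoms, bases and alternation are unchanged. It therefore remains to put every internal vertex \(p_i\) into some \(M'\)-Tposy of \(G+P\).

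\emph{Case \(u=v\).} Here \(P\) is an odd cycle of length \(2k+1\) with exactly \(k\) edges of \(M'\), i.e. an \(M'\)-blossom with base \(u\). By \cref{lem:JObs} there is a non-trivial \(mm\)-alternating path \(R\) in \(G\) from \(u\) to a base, say \(b_1\). We cut \(R\) at its first vertex \(w\) meeting \(B_1\cup B_2\) (other than \(u\) itself). If the segment \(u R w\) is \(mm\)-alternating and \(w\) is a base, we obtain a Tposy \(P\cup uRw\cup B_i\). Otherwise \(w\) is a non-base vertex of a blossom. In that case we continue from \(w\) around that blossom in the direction starting with the appropriate edge, which reaches the base via an \(mm\)-path; this is the usual property that every vertex of a blossom has an even alternating path to the base that ends correctly. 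The result is an \(mm\)-path internally disjoint from \(P\) and from the target blossom, so \(P\), this path, and \(B_i\) form a Tposy containing all \(p_i\).

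\emph{Case \(u\neq v\).} We use the structure of Tposies: \(G\) is an even subdivision of a barbell or of \(K_4\). We do a case analysis according to where \(u\) and \(v\) lie: both on the same blossom, on different blossoms, one or both on \(Q\). In each case we look for an \(M\)-\(nn\)-alternating path \(R\) in \(G\) from \(u\) to \(v\). If one exists, \(P\cup R\) is an odd cycle in which the two unmatched edges at \(u\) meet, so it is an \(M'\)-blossom with base \(u\) (or the analogous vertex). We then attach it to \(B_1\) or \(B_2\) by an \(mm\)-path obtained from \cref{lem:JObs}, trimmed as in the first case. If instead only \(mm\)-alternating \(u\)–\(v\) paths exist, then \(P\cup R\) is an even \(M'\)-alternating cycle. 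We then replace \(M'\) by \(M'\triangle E(P\cup R)\), which is again perfect. With respect to the new matching, \(P\) sits inside a structure in which one of the original blossoms, or \(Q\), has been rerouted through \(P\). We exhibit that rerouted Tposy directly, for example the blossom \(B_1\) with an arc replaced by \(P\), when \(u,v\in B_1\) with the right parity.

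The main obstacle is the internal-disjointness requirement of a Tposy. Natural constructions via \cref{lem:JObs} and \cref{triviallemma} only give Jposies or posies, and simplifying them as in \cref{adfw} could discard \(P\). My first attempt is therefore to choose \(R\) shortest, and to cut at the first contact with \(B_1\cup B_2\cup Q\), so that the new blossom and its connecting path touch the old configuration only at a single base. Only if that fails would I fall back on the explicit barbell/\(K_4\) case analysis.
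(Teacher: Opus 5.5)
Your case $u\neq v$ rests on a parity error. With respect to $M'$ the ear $P$ is $nn$-alternating of odd length. An $M$-$nn$-alternating $u$--$v$ path $R$ in $G$ also has odd length. So $P\cup R$ is an \emph{even} cycle in which both $u$ and $v$ meet two non-matching edges, and it is neither a blossom nor an alternating cycle.

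For instance, take two triangles $b_1a_1a_2$ and $b_2c_1c_2$ with $M=\{a_1a_2,b_1b_2,c_1c_2\}$ and the ear $P=a_1p_1p_2c_1$. Then $R=a_1b_1b_2c_1$ is $nn$ and $P\cup R$ is a $6$-cycle. The Tposy through $p_1,p_2$ actually comes from switching along the even alternating cycle formed by $P$ and the $mm$-path $a_1a_2b_1b_2c_2c_1$ (the paper's Case~4).

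The three types need three different moves:
\begin{enumerate}
\item $P\cup R$ is a blossom exactly when $R$ has even length ($nm$ or $mn$), and its base is the end at which $R$ starts with a non-matching edge (Cases~1.3 and~3.2).
\item An $nn$ segment of $Q$ or of a blossom arc must instead be \emph{replaced} by $P$, which preserves alternation and parity (Cases~1.2 and~2.2).
\item Only the $mm$ case calls for the switch $M'\triangle E(P\cup R)$.
\end{enumerate}
Reading your ``$nn$'' as ``$nm$'' (as your phrase ``the two unmatched edges at $u$ meet'' suggests) does not save the dichotomy ``an $nn$ path exists / only $mm$ paths exist''. For $u,v$ on $Q$ in a barbell subdivision, the segment of $Q$ is the only $u$--$v$ path, and it can be of any of the three types. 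So you need three different constructions, each followed by the internal-disjointness check for every position of $u$ and $v$. That is the case analysis the paper actually carries out; your proposal only announces it.

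The trimming step in the case $u=v$ fails as well. Both edges of $B_i$ at its base $b_i$ are non-matching, so a path that reaches $b_i$ from inside $B_i$ always arrives through a non-matching edge. ``Continuing around the blossom from $w$ to the base via an $mm$-path'' is therefore impossible. The $u$--$b_i$ path you build is even and runs through $B_i$, so it is neither $mm$ nor internally disjoint from $B_i$. This is exactly what happens when $u$ is a non-base vertex of $B_1$, where $w=M(u)$.

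In a barbell subdivision the cure is not to trim at all. The $mm$-path of \cref{lem:JObs} goes around $B_1$ to $b_1$ and then along $Q$ to $b_2$. It is internally disjoint from $B_2$, and together with $P$ and $B_2$ it forms the Tposy, which is what the paper uses.

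In a $K_4$ subdivision with $u$ interior to the segment shared by $B_1$ and $B_2$, every path from $u$ to $b_1$ or $b_2$ has an interior vertex in $B_1\cap B_2$. So neither blossom can be the partner of $P$, and you must use a third blossom. One choice is the cycle formed by $Q$ and the two ears joining its ends to the endpoint of the shared segment that $u$ reaches by an $mm$-subpath; that endpoint is its base.
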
	
\begin{proof}
	Since Tposy-graphs are even subdivisions of a Barbell or a \(K4\), we break the proof in two. One when \(G\) is an even subdivision of a Barbell graphs, and the other when \(G\) is an even subdivision of a\(K_{4}\).
	
	Let \(G\) be an even subdivision of a Barbell graph. Let \( H \) be the \( M \)-Tposy that defines \( G \), consisting of two \( M \)-blossoms \( B_1 \) and \( B_2 \) with bases \( b_1 \) and \( b_2 \), respectively, and an \( M \)-\( mm \)-alternating path \( R = b_1 R b_2 \) that is internally disjoint from both blossoms. From now on, \(M\) is extended to a perfect matching of \(G+P\). 
	
	If \(u=v\) the result is clear, because the ear form a new \(M\)-blossoms \(B_{3}\) with base \(u\), \(M\)-blossom, together \( M \)-\( mm \)-alternating path \(u,M(u),...,b\), where \(b\) is the base of one of the \(M\)-blossoms \(B_{1}\) or \(B_{2}\) form a Tposy of \(G+P\) relative to \(M\). 
	
	Assume that \(u\neq v\). We consider four main cases depending on how the ear \( P \) connects vertices in \( G \): (1) both endpoints are on the path \( R \), (2) both are in the same blossom, (3) one is in a blossom and the other on the path, and (4) each is in a different blossom. Each case has two or three subcases, depending on whether the \( M \)-alternating path between \( u \) and \( v \) in \( G \) is of type \( mm \), \( nn \), or \( nm \). 
	
	Our goal is to prove that the internal vertices of \( P \) belong to a Tposy of \( G + P \).
	
	\textbf{Case 1:} Both \( u \) and \( v \) lie on the path \( R \) (see \cref{fig:odioapane1}).
	
	\textbf{Case 1.1:} The path \( u Q v \) in \( G \) is an \( M \)-\( mm \)-alternating subpath. This corresponds to the ear \( 2P3 \) in \cref{fig:odioapane1}. Let \( M^* \) be the symmetric difference of \( M \) with the cycle \( u P v Q u \). Then \( M^* \) is a maximum matching. Note that \( B_1 \) and \( B_2 \) remain \( M^* \)-blossoms, and the path \( b_1 R u P v Q u R b_2 \) is \( M^* \)-\( mm \)-alternating. Together, they form an \( M^* \)-Tposy that includes all vertices of \( P \).
	
	\textbf{Case 1.2:} The path \( u Q v \) in \( G \) is an \( M \)-\( nn \)-alternating subpath. This corresponds to the ear \( 3P4 \) in \cref{fig:odioapane1}. The path \( R^* = b_1 R u P v Q R b_2 \) is an \( M \)-\( mm \)-alternating path. The blossoms \( B_1 \) and \( B_2 \), along with \( R^* \), form an \( M \)-Tposy that includes all vertices of \( P \).
	
	\textbf{Case 1.3:} The path \( u Q v \) in \( G \) is an \( M \)-\( nm \)-alternating subpath. This corresponds to the ear \( 3P5 \) in \cref{fig:odioapane1}. The cycle \( u P v R u \) is an \( M \)-blossom \( B_3 \) in \( G + P \) containing all vertices of \( P \). Then, either \( B_1 \) or \( B_2 \), together with \( B_3 \) and one of the paths \( b_1 R u \) or \( b_2 R u \), form an \( M \)-Tposy that includes all vertices of \( P \). In \cref{fig:odioapane1}, for example, the blossom \( 2,0,1 \), the path \( 2,3 \), and the blossom \( 3,3,4,5,13,12 \) illustrate this configuration.

	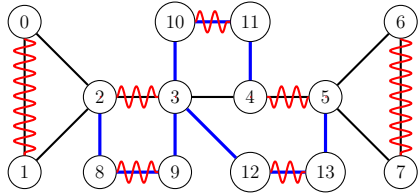
\begin{figure}[H]
		\centering	
		\begin{tikzpicture}[scale=2]
			
			\node[draw,circle,scale=0.6] (0) at (1.5,6.5) {\(0\)};
			\node[draw,circle,scale=0.6] (1) at (1.5,5.5) {\(1\)};
			\node[draw,circle,scale=0.6] (2) at (2.0,6.0) {\(2\)};
			\node[draw,circle,scale=0.6] (3) at (2.5,6.0) {\(3\)};
			\node[draw,circle,scale=0.6] (4) at (3.0,6.0) {\(4\)};
			\node[draw,circle,scale=0.6] (5) at (3.5,6.0) {\(5\)};
			\node[draw,circle,scale=0.6] (6) at (4.0,6.5) {\(6\)};
			\node[draw,circle,scale=0.6] (7) at (4.0,5.5) {\(7\)};
			\node[draw,circle,scale=0.6] (8) at (2.0,5.5) {\(8\)};
			\node[draw,circle,scale=0.6] (9) at (2.5,5.5) {\(9\)};
			\node[draw,circle,scale=0.6] (10) at (2.5,6.5) {\(10\)};
			\node[draw,circle,scale=0.6] (11) at (3.0,6.5) {\(11\)};
			\node[draw,circle,scale=0.6] (12) at (3.0,5.5) {\(12\)};
			\node[draw,circle,scale=0.6] (13) at (3.5,5.5) {\(13\)};
			
			\foreach \from/\to in { {0/1}, {1/2}, {2/0}, {2/3}, {3/4}, {4/5}, {5/6}, {6/7}, {7/5}} {
				\path[draw, thick] (\from) -- (\to);}
			
			\foreach \from/\to in { {2/8}, {8/9}, {9/3}, {3/10}, {10/11}, {11/4}, {3/12}, {12/13}, {13/5}} {
				\path[draw, blue, very thick] (\from) -- (\to);}	
			
			\foreach \from/\to in { {0/1}, {2/2}, {3/3}, {2/3}, {4/4}, {5/5}, {5/4}, {6/7}, {8/9}, {10/11}, {12/13}} {
				\path[decorate, decoration={snake,amplitude=4, segment length=2mm},draw=red,thick] (\from) -- (\to);}
		\end{tikzpicture}
		\caption{Case 1: Three ears: \(2P3=2,8,9,3\), \(3P4=3,10,11,4\) and \(3P5=3,12,13,5\).}\label{fig:odioapane1}
	\end{figure}

%
%
%
%
%
%
	
	%
	%
	%
	
	\textbf{Case 2:} Both \( u \) and \( v \) are vertices of one of the blossoms (see \cref{fig:odioapane2}). Without loss of generality, assume that \( u, v \in V(B_1) \).
	
	\textbf{Case 2.1:} The path \( u Q v \) in \( G \) is an \( M \)-\( mm \)-alternating subpath. This corresponds to the ear \( 9P10 \) in \cref{fig:odioapane1}. Let \( M^* \) be the symmetric difference of \( M \) with the cycle \( u P v Q u \). Then \( M^* \) is a maximum matching. The blossom \( B_2 \) remains an \( M^* \)-blossom, and the path \( R \) is \( M^* \)-\( mm \)-alternating. The cycle formed by \( b_1 B_1 u P v B_1 b_1 \) (or symmetrically \( b_1 B_1 v P u B_1 b_1 \)) is an \( M^* \)-blossom of \( G + P \), denoted \( B_3 \), which contains all vertices of the ear \( P \). Thus, \( B_2 \), \( B_3 \), and the path \( R \) form an \( M^* \)-Tposy that includes all vertices of \( P \).
	
	\textbf{Case 2.2:} The path \( u Q v \) in \( G \) is an \( M \)-\( nn \)-alternating subpath. This corresponds to the ear \( 2P3 \) in \cref{fig:odioapane1}. In this case, the cycle \( b_1 B_1 u P v B_1 b_1 \) (or \( b_1 B_1 v P u B_1 b_1 \)) is an \( M \)-blossom of \( G + P \), denoted \( B_3 \), which contains all the vertices of \( P \). Then \( B_3 \), \( B_2 \), and the path \( R \) form an \( M \)-Tposy containing all vertices of \( P \).
	
	\textbf{Case 2.3:} The path \( u Q v \) in \( G \) is an \( M \)-\( nm \)-alternating subpath. This corresponds to the ear \( 6P8 \) in \cref{fig:odioapane1}. The cycle \( b_1 B_1 u P v B_2 u \) forms an \( M \)-blossom in \( G + P \), denoted \( B_3 \), that contains all vertices of the ear \( P \). Then \( B_3 \), \( B_2 \), and the path \( u B_1 b_1 R \) form an \( M \)-Tposy that includes all vertices of \( P \).

	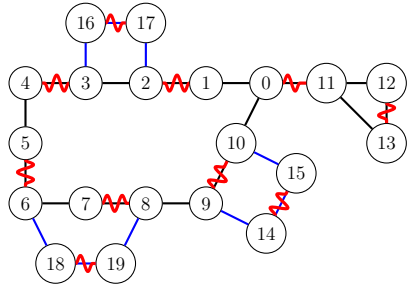
\begin{figure}[h]
		\centering
		\begin{tikzpicture}[scale=0.8]
			
			\node[draw,circle,scale=0.6] (0) at (5.5,6.0) {\( 0 \)};
			\node[draw,circle,scale=0.6] (1) at (4.5,6.0) {\( 1 \)};
			\node[draw,circle,scale=0.6] (2) at (3.5,6.0) {\( 2 \)};
			\node[draw,circle,scale=0.6] (3) at (2.5,6.0) {\( 3 \)};
			\node[draw,circle,scale=0.6] (4) at (1.5,6.0) {\( 4 \)};
			\node[draw,circle,scale=0.6] (5) at (1.5,5.0) {\( 5 \)};
			\node[draw,circle,scale=0.6] (6) at (1.5,4.0) {\( 6 \)};
			\node[draw,circle,scale=0.6] (7) at (2.5,4.0) {\( 7 \)};
			\node[draw,circle,scale=0.6] (8) at (3.5,4.0) {\( 8 \)};
			\node[draw,circle,scale=0.6] (9) at (4.5,4.0) {\( 9 \)};
			\node[draw,circle,scale=0.6] (10) at (5.0,5.0) {\( 10\)};
			\node[draw,circle,scale=0.6] (11) at (6.5,6.0) {\( 11\)};
			\node[draw,circle,scale=0.6] (12) at (7.5,6.0) {\( 12\)};
			\node[draw,circle,scale=0.6] (13) at (7.5,5.0) {\( 13\)};
			\node[draw,circle,scale=0.6] (14) at (5.5,3.5) {\( 14\)};
			\node[draw,circle,scale=0.6] (15) at (6.0,4.5) {\( 15\)};
			\node[draw,circle,scale=0.6] (16) at (2.5,7.0) {\( 16\)};
			\node[draw,circle,scale=0.6] (17) at (3.5,7.0) {\( 17\)};
			\node[draw,circle,scale=0.6] (18) at (2.0,3.0) {\( 18\)};
			\node[draw,circle,scale=0.6] (19) at (3.0,3.0) {\( 19\)};
			
			\foreach \from/\to in { {0/1}, {1/2}, {2/3}, {3/4}, {4/5}, {5/6}, {6/7}, {7/8}, {8/9}, {10/9}, {10/0}, {0/11}, {11/12}, {12/13}, {13/11}} {
				\path[draw, thick ] (\from) -- (\to);}
			
			\foreach \from/\to in { {14/9}, {10/15}, {15/14}, {3/16}, {16/17}, {17/2}, {18/19}, {6/18}, {19/8}} {
				\path[draw,blue, thick] (\from) -- (\to);}
			
			\foreach \from/\to in { {0/11}, {12/13}, {1/2}, {3/4}, {5/6}, {7/8}, {9/10}, {18/19}, {14/15}, {16/17}} {
				\path[decorate, decoration={snake,amplitude=3, segment length=2mm},draw=red,very thick] (\from) -- (\to);}
		\end{tikzpicture}
		\caption{Case 2: Three ears: \(2P3=2,17,16,3\), \(9P10=9,14,15,10\) and \(6P8=6,18,19,8\).}\label{fig:odioapane2}
	\end{figure}

	\textbf{Case 3:} The vertex \( u \) belongs to the blossom \( B_1 \), and \( v \) belongs to the path \( R \) of the \( M \)-Tposy \( H \); see \cref{fig:odioapane3}. The ears \( 2P9 \), \( 2P10 \), and \( 8P10 \) all correspond to this general configuration.
	
	\textbf{Case 3.1:} The path \( uQv \) in \( G \) is an \( M \)-\( mm \)-alternating subpath. This corresponds to the ear \( 2P9 \) in \cref{fig:odioapane3}, where \( Q = 2,1,0,9 \). Let \( M^* = M \oplus C \), where \( C = uQvPu \) is the cycle formed by joining the ear \( P \) and the path \( Q \). Then \( M^* \) is a maximum matching. Under \( M^* \), both \( B_1 \) and \( B_2 \) remain \( M^* \)-blossoms, with \( u \) now serving as the base of \( B_1 \). The path \( uPvRb_2 \) is an \( M^* \)-\( mm \)-alternating path that is internally disjoint from both blossoms. Together, these form an \( M^* \)-Tposy containing all the vertices of \( P \).
	
	\textbf{Case 3.2:} The path \( uQv \) in \( G \) is an \( M \)-\( mn \)-alternating subpath. This corresponds to the ear \( 2P10 \) in \cref{fig:odioapane3}, where \( Q = 2,1,0,9,10 \). Let \( B_3 \) be the \( M \)-blossom formed by the closed \( M \)-alternating walk \( vPuQv \). Then \( B_3 \), together with the \( M \)-\( mm \)-alternating path \( vRb_2 \), and the \( M \)-blossom \( B_2 \), forms an \( M \)-Tposy that includes all the vertices of \( P \).

	\begin{figure}[h]
		\centering
		\begin{tikzpicture}[scale=1.6]
			\node[draw,circle,scale=0.6] (0) at (3.5,5.5) {\(0\)};
			\node[draw,circle,scale=0.6] (1) at (3.0,6.0) {\(1\)};
			\node[draw,circle,scale=0.6] (2) at (2.5,6.0) {\(2\)};
			\node[draw,circle,scale=0.6] (3) at (2.0,6.0) {\(3\)};
			\node[draw,circle,scale=0.6] (4) at (1.5,6.0) {\(4\)};
			\node[draw,circle,scale=0.6] (5) at (1.5,5.0) {\(5\)};
			\node[draw,circle,scale=0.6] (6) at (2.0,5.0) {\(6\)};
			\node[draw,circle,scale=0.6] (7) at (2.5,5.0) {\(7\)};
			\node[draw,circle,scale=0.6] (8) at (3.0,5.0) {\(8\)};
			\node[draw,circle,scale=0.6] (9) at (4.0,5.5) {\(9\)};
			\node[draw,circle,scale=0.6] (10) at (4.5,5.5) {\(10\)};
			\node[draw,circle,scale=0.6] (11) at (5.0,5.5) {\(11\)};
			\node[draw,circle,scale=0.6] (12) at (5.5,6.0) {\(12\)};
			\node[draw,circle,scale=0.6] (13) at (5.5,5.0) {\(13\)};
			\node[draw,circle,scale=0.6] (14) at (3.5,4.5) {\(14\)};
			\node[draw,circle,scale=0.6] (15) at (4.0,4.5) {\(15\)};
			\node[draw,circle,scale=0.6] (16) at (3.0,6.5) {\(16\)};
			\node[draw,circle,scale=0.6] (17) at (3.5,6.5) {\(17\)};
			\node[draw,circle,scale=0.6] (18) at (3.0,7.0) {\(18\)};
			\node[draw,circle,scale=0.6] (19) at (3.5,7.0) {\(19\)};
			\node[draw,circle,scale=0.6] (20) at (3.5,4.0) {\(20\)};
			\node[draw,circle,scale=0.6] (21) at (4.0,4.0) {\(21\)};
			
			\foreach \from/\to in { {0/1}, {1/2}, {2/3}, {3/4}, {4/5}, {5/6}, {6/7}, {7/8}, {8/0}, {0/9}, {10/9}, {10/11}, {11/12}, {12/13}, {13/11}} {
				\path[draw, thick] (\from) -- (\to);}
			
			\foreach \from/\to in { {8/14}, {14/15}, {10/15}, {16/16}, {2/2}, {2/16}, {16/17}, {17/9}, {2/18}, {18/19}, {19/10}, {8/20}, {20/21}, {21/21}, {11/21}} {
				\path[draw, thick, blue] (\from) -- (\to);}
			
			\foreach \from/\to in { {0/9}, {10/11}, {12/13}, {1/2}, {3/4}, {6/5}, {7/8}, {16/17}, {18/19}, {14/15}, {20/21}} {
				\path[decorate, decoration={snake,amplitude=3, segment length=2mm},draw=red, very thick] (\from) -- (\to);}
		\end{tikzpicture}
		\caption{Case 3: Four ears: \(2P9=2,16,17,9\), \(8P10=8,14,15,10\),  \(2P10=2,18,19,10\), and \(8P11=8,20,21,11\).}\label{fig:odioapane3}
	\end{figure}
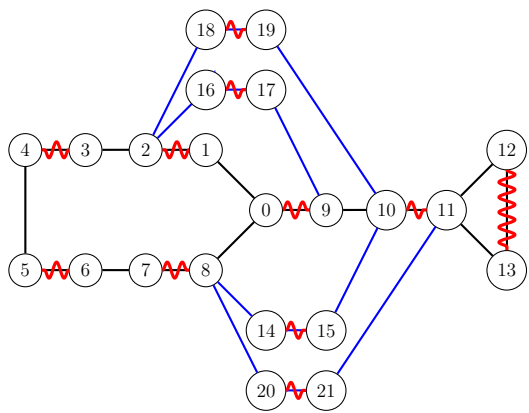

	\textbf{Case 4:} The endpoints \( u \) and \( v \) of the ear \( P \) lie in different blossoms of the \( M \)-Tposy \( H \); specifically, \( u \in V(B_1) \) and \( v \in V(B_2) \), see \cref{fig:odioapane4}. This applies to the ears \(2P7\), \( 2P6 \), \( 4P9 \), and \( 4P8 \), which are structurally similar.
	
	Let \( Q \) be the path in \( G \) from \( u \) to \( v \), and assume that \( uQv \) is an \( M \)-alternating path, see \cref{fig:odioapane4} where \(uPv=2,11,13,7\) and \(uQv=2,(M(2)=1),0,5,6,(M(6)=7)\). Define \( M^* = M \triangle C \), where \( C = u Q v P u \) is the cycle formed by joining \( P \) and \( Q \). Then \( M^* \) is a maximum matching. 
	
	Under \( M^* \), \( B_1 \) and \( B_2 \) are the blossoms, but now with \( u \) and \( v \) as their respective bases. The path \( P \) is an \( M^* \)-\( mm \)-alternating path that is internally disjoint from both \( M^* \)-blossoms. Hence, the configuration consisting of \( B_1 \), \( B_2 \), and \( P \) forms an \( M^* \)-Tposy that includes all vertices of \( P \). \qedhere
	
	\begin{figure}[H]
		\centering
		\begin{tikzpicture}[scale=0.9] 
			
			\node[draw,circle,scale=0.6] (0) at (3.0,6.0) {\(0\)};
			\node[draw,circle,scale=0.6] (1) at (2.5,7.0) {\(1\)};
			\node[draw,circle,scale=0.6] (2) at (1.5,7.0) {\(2\)};
			\node[draw,circle,scale=0.6] (3) at (1.5,5.0) {\(3\)};
			\node[draw,circle,scale=0.6] (4) at (2.5,5.0) {\(4\)};
			\node[draw,circle,scale=0.6] (5) at (4.0,6.0) {\(5\)};
			\node[draw,circle,scale=0.6] (6) at (4.5,7.0) {\(6\)};
			\node[draw,circle,scale=0.6] (7) at (5.5,7.0) {\(7\)};
			\node[draw,circle,scale=0.6] (8) at (5.5,5.0) {\(8\)};
			\node[draw,circle,scale=0.6] (9) at (4.5,5.0) {\(9\)};
			\node[draw,circle,scale=0.6] (10) at (3.0,8.0) {\(10\)};
			\node[draw,circle,scale=0.6] (11) at (3.0,9.0) {\(11\)};
			\node[draw,circle,scale=0.6] (12) at (4.0,8.0) {\(12\)};
			\node[draw,circle,scale=0.6] (13) at (4.0,9.0) {\(13\)};
			\node[draw,circle,scale=0.6] (14) at (3.0,4.0) {\(14\)};
			\node[draw,circle,scale=0.6] (15) at (4.0,4.0) {\(15\)};
			\node[draw,circle,scale=0.6] (16) at (3.0,3.0) {\(16\)};
			\node[draw,circle,scale=0.6] (17) at (4.0,3.0) {\(17\)};
			
			\foreach \from/\to in { {0/1}, {1/2}, {2/3}, {3/4}, {4/0}, {0/5}, {5/6}, {6/7}, {7/8}, {8/9}} {
				\path[draw, very thick] (\from) -- (\to);}
			
			\foreach \from/\to in { {9/5}, {2/10}, {2/11}, {10/12}, {11/13}, {12/6}, {13/7}, {14/15}, {16/17}, {4/14}, {15/9}, {4/16}, {17/8}} {
				\path[draw, blue ,very thick] (\from) -- (\to);}
			
			\foreach \from/\to in { {16/17}, {0/5}, {10/12}, {11/13}, {14/15}, {2/1}, {6/7}, {9/8}, {3/4}} {
				\path[decorate, decoration={snake,amplitude=3, segment length=2mm},draw=red, very thick] (\from) -- (\to);}
		\end{tikzpicture}
		\caption{Case 3: Four ears: \(2P7=2,11,13,7\), \(2P6=2,10,12,6\),  \(4P9=4,14,15,9\), and \(4P8=4,16,17,8\).}\label{fig:odioapane4}
	\end{figure}

	Let \(G\) be an even subdivision of a \(K_{4}\). An analysis similar to the one that we do for the case when \(G\) is an even subdivision of a Barbell graph allow us to prove that all the internal vertices of \(P\) belong to a Tposy of \(G+P\). If you follow that path you face with 4 major cases (exploiting symmetries), each of them with sub cases. But it is possible another approach. 
	
	Let $G$ be an even subdivision of $K_{4}$. The graph $G$ can be constructed
	from four vertices $v_{1},v_{2},v_{3}$ and $v_{4}$, where for each pair of distinct	vertices $v_{i},v_{j}$ with $i\neq j$ an ear $P_{ij}$ of odd length
	is attached. We now add to $G$ an additional ear $P=u_{1}\dotsm u_{k}$
	with $u_{1}\neq u_{k}$. We use that \(G\) has exactly three perfect matchings.
	
	$ $\\
	\textbf{Case 1.} $u_{1},u_{k}\in V(v_{1}Pv_{2})$. This case is the same as Case 1 when \(G\) is an even subdivision of a Barbell.
	
	$ $\\
	\textbf{Case 2.} $u_{1}\in V\left(v_{1}Pv_{2}\right)$ and $u_{k}\in V\left(v_{1}Pv_{3}\right)$. 
	
	$ $
	
	\textbf{Case 2.1.} $\text{dist}(v_{1},u_{1})\equiv 0 \pmod{2}$ and $\text{dist}(v_{1},u_{k})\equiv 0 \pmod{2}$. See \cref{K4FiguraCase21}. Choosing any perfect matching of \(G\) such that the paths from \(v_1\) to \(v_{2}\), and from \(v_{1}\) to \(v_{3}\) are \(nn\)-alternating.  We obtain a Tposy that uses \(u_{1}Pu_{k}\) as part of one of its blossoms.
	
	\begin{figure}[H]
		
		\begin{center}			
			
			\tikzset{every picture/.style={line width=0.75pt}} 
			
			\hspace{-3cm}
			\begin{tikzpicture}[x=0.75pt,y=0.75pt,yscale=-1,xscale=1, scale=.85]
				
				\draw    (438.47,243.39) -- (193.37,243.39) ;
				\draw [shift={(193.37,243.39)}, rotate = 180] [color={rgb, 255:red, 0; green, 0; blue, 0 }  ][fill={rgb, 255:red, 0; green, 0; blue, 0 }  ][line width=0.75]      (0, 0) circle [x radius= 3.35, y radius= 3.35]   ;
				\draw [shift={(438.47,243.39)}, rotate = 180] [color={rgb, 255:red, 0; green, 0; blue, 0 }  ][fill={rgb, 255:red, 0; green, 0; blue, 0 }  ][line width=0.75]      (0, 0) circle [x radius= 3.35, y radius= 3.35]   ;
				\draw    (438.47,243.39) -- (315.92,31.13) ;
				\draw [shift={(315.92,31.13)}, rotate = 240] [color={rgb, 255:red, 0; green, 0; blue, 0 }  ][fill={rgb, 255:red, 0; green, 0; blue, 0 }  ][line width=0.75]      (0, 0) circle [x radius= 3.35, y radius= 3.35]   ;
				\draw [shift={(438.47,243.39)}, rotate = 240] [color={rgb, 255:red, 0; green, 0; blue, 0 }  ][fill={rgb, 255:red, 0; green, 0; blue, 0 }  ][line width=0.75]      (0, 0) circle [x radius= 3.35, y radius= 3.35]   ;
				\draw    (193.37,243.39) -- (315.92,31.13) ;
				\draw [shift={(315.92,31.13)}, rotate = 300] [color={rgb, 255:red, 0; green, 0; blue, 0 }  ][fill={rgb, 255:red, 0; green, 0; blue, 0 }  ][line width=0.75]      (0, 0) circle [x radius= 3.35, y radius= 3.35]   ;
				\draw [shift={(193.37,243.39)}, rotate = 300] [color={rgb, 255:red, 0; green, 0; blue, 0 }  ][fill={rgb, 255:red, 0; green, 0; blue, 0 }  ][line width=0.75]      (0, 0) circle [x radius= 3.35, y radius= 3.35]   ;
				\draw    (438.47,243.39) -- (315.92,150.07) ;
				\draw [shift={(315.92,150.07)}, rotate = 217.29] [color={rgb, 255:red, 0; green, 0; blue, 0 }  ][fill={rgb, 255:red, 0; green, 0; blue, 0 }  ][line width=0.75]      (0, 0) circle [x radius= 3.35, y radius= 3.35]   ;
				\draw [shift={(438.47,243.39)}, rotate = 217.29] [color={rgb, 255:red, 0; green, 0; blue, 0 }  ][fill={rgb, 255:red, 0; green, 0; blue, 0 }  ][line width=0.75]      (0, 0) circle [x radius= 3.35, y radius= 3.35]   ;
				\draw    (193.37,243.39) -- (315.92,150.07) ;
				\draw [shift={(315.92,150.07)}, rotate = 322.71] [color={rgb, 255:red, 0; green, 0; blue, 0 }  ][fill={rgb, 255:red, 0; green, 0; blue, 0 }  ][line width=0.75]      (0, 0) circle [x radius= 3.35, y radius= 3.35]   ;
				\draw [shift={(193.37,243.39)}, rotate = 322.71] [color={rgb, 255:red, 0; green, 0; blue, 0 }  ][fill={rgb, 255:red, 0; green, 0; blue, 0 }  ][line width=0.75]      (0, 0) circle [x radius= 3.35, y radius= 3.35]   ;
				\draw    (315.92,150.07) -- (315.92,31.13) ;
				\draw [shift={(315.92,31.13)}, rotate = 270] [color={rgb, 255:red, 0; green, 0; blue, 0 }  ][fill={rgb, 255:red, 0; green, 0; blue, 0 }  ][line width=0.75]      (0, 0) circle [x radius= 3.35, y radius= 3.35]   ;
				\draw [shift={(315.92,150.07)}, rotate = 270] [color={rgb, 255:red, 0; green, 0; blue, 0 }  ][fill={rgb, 255:red, 0; green, 0; blue, 0 }  ][line width=0.75]      (0, 0) circle [x radius= 3.35, y radius= 3.35]   ;
				\draw    (282.47,243.39) -- (193.37,243.39) ;
				\draw [shift={(193.37,243.39)}, rotate = 180] [color={rgb, 255:red, 0; green, 0; blue, 0 }  ][fill={rgb, 255:red, 0; green, 0; blue, 0 }  ][line width=0.75]      (0, 0) circle [x radius= 3.35, y radius= 3.35]   ;
				\draw [shift={(282.47,243.39)}, rotate = 180] [color={rgb, 255:red, 0; green, 0; blue, 0 }  ][fill={rgb, 255:red, 0; green, 0; blue, 0 }  ][line width=0.75]      (0, 0) circle [x radius= 3.35, y radius= 3.35]   ;
				\draw    (438.47,243.39) -- (349.37,243.39) ;
				\draw [shift={(349.37,243.39)}, rotate = 180] [color={rgb, 255:red, 0; green, 0; blue, 0 }  ][fill={rgb, 255:red, 0; green, 0; blue, 0 }  ][line width=0.75]      (0, 0) circle [x radius= 3.35, y radius= 3.35]   ;
				\draw [shift={(438.47,243.39)}, rotate = 180] [color={rgb, 255:red, 0; green, 0; blue, 0 }  ][fill={rgb, 255:red, 0; green, 0; blue, 0 }  ][line width=0.75]      (0, 0) circle [x radius= 3.35, y radius= 3.35]   ;
				\draw    (238.47,164.07) -- (193.37,243.39) ;
				\draw [shift={(193.37,243.39)}, rotate = 119.62] [color={rgb, 255:red, 0; green, 0; blue, 0 }  ][fill={rgb, 255:red, 0; green, 0; blue, 0 }  ][line width=0.75]      (0, 0) circle [x radius= 3.35, y radius= 3.35]   ;
				\draw [shift={(238.47,164.07)}, rotate = 119.62] [color={rgb, 255:red, 0; green, 0; blue, 0 }  ][fill={rgb, 255:red, 0; green, 0; blue, 0 }  ][line width=0.75]      (0, 0) circle [x radius= 3.35, y radius= 3.35]   ;
				\draw    (315.92,31.13) -- (270.82,110.45) ;
				\draw [shift={(270.82,110.45)}, rotate = 119.62] [color={rgb, 255:red, 0; green, 0; blue, 0 }  ][fill={rgb, 255:red, 0; green, 0; blue, 0 }  ][line width=0.75]      (0, 0) circle [x radius= 3.35, y radius= 3.35]   ;
				\draw [shift={(315.92,31.13)}, rotate = 119.62] [color={rgb, 255:red, 0; green, 0; blue, 0 }  ][fill={rgb, 255:red, 0; green, 0; blue, 0 }  ][line width=0.75]      (0, 0) circle [x radius= 3.35, y radius= 3.35]   ;
				\draw    (349.37,243.39) .. controls (149.76,426.9) and (-14.24,167.9) .. (270.82,110.45) ;
				\draw [color={rgb, 255:red, 255; green, 0; blue, 0 }  ,draw opacity=1 ][line width=1.5]    (238.47,164.07) -- (270.82,110.45) ;
				\draw [color={rgb, 255:red, 255; green, 0; blue, 0 }  ,draw opacity=1 ][line width=1.5]    (349.37,243.39) -- (282.47,243.39) ;
				\draw [color={rgb, 255:red, 255; green, 0; blue, 0 }  ,draw opacity=1 ][line width=1.5]    (315.92,150.07) -- (193.37,243.39) ;
				\draw [color={rgb, 255:red, 255; green, 0; blue, 0 }  ,draw opacity=1 ][line width=1.5]    (438.47,243.39) -- (315.92,31.13) ;
				\draw [color={rgb, 255:red, 126; green, 211; blue, 33 }  ,draw opacity=0.35 ][line width=6]    (193.37,243.39) .. controls (259.47,194.87) and (281.47,176.87) .. (315.92,150.07) ;
				\draw    (438.47,243.39) ;
				\draw [shift={(438.47,243.39)}, rotate = 0] [color={rgb, 255:red, 0; green, 0; blue, 0 }  ][fill={rgb, 255:red, 0; green, 0; blue, 0 }  ][line width=0.75]      (0, 0) circle [x radius= 3.35, y radius= 3.35]   ;
				\draw [shift={(438.47,243.39)}, rotate = 0] [color={rgb, 255:red, 0; green, 0; blue, 0 }  ][fill={rgb, 255:red, 0; green, 0; blue, 0 }  ][line width=0.75]      (0, 0) circle [x radius= 3.35, y radius= 3.35]   ;
				\draw    (270.82,110.45) ;
				\draw [shift={(270.82,110.45)}, rotate = 0] [color={rgb, 255:red, 0; green, 0; blue, 0 }  ][fill={rgb, 255:red, 0; green, 0; blue, 0 }  ][line width=0.75]      (0, 0) circle [x radius= 3.35, y radius= 3.35]   ;
				\draw [shift={(270.82,110.45)}, rotate = 0] [color={rgb, 255:red, 0; green, 0; blue, 0 }  ][fill={rgb, 255:red, 0; green, 0; blue, 0 }  ][line width=0.75]      (0, 0) circle [x radius= 3.35, y radius= 3.35]   ;
				\draw    (315.92,150.07) ;
				\draw [shift={(315.92,150.07)}, rotate = 0] [color={rgb, 255:red, 0; green, 0; blue, 0 }  ][fill={rgb, 255:red, 0; green, 0; blue, 0 }  ][line width=0.75]      (0, 0) circle [x radius= 3.35, y radius= 3.35]   ;
				\draw [shift={(315.92,150.07)}, rotate = 0] [color={rgb, 255:red, 0; green, 0; blue, 0 }  ][fill={rgb, 255:red, 0; green, 0; blue, 0 }  ][line width=0.75]      (0, 0) circle [x radius= 3.35, y radius= 3.35]   ;
				\draw    (238.47,164.07) ;
				\draw [shift={(238.47,164.07)}, rotate = 0] [color={rgb, 255:red, 0; green, 0; blue, 0 }  ][fill={rgb, 255:red, 0; green, 0; blue, 0 }  ][line width=0.75]      (0, 0) circle [x radius= 3.35, y radius= 3.35]   ;
				\draw [shift={(238.47,164.07)}, rotate = 0] [color={rgb, 255:red, 0; green, 0; blue, 0 }  ][fill={rgb, 255:red, 0; green, 0; blue, 0 }  ][line width=0.75]      (0, 0) circle [x radius= 3.35, y radius= 3.35]   ;
				\draw    (193.37,243.39) ;
				\draw [shift={(193.37,243.39)}, rotate = 0] [color={rgb, 255:red, 0; green, 0; blue, 0 }  ][fill={rgb, 255:red, 0; green, 0; blue, 0 }  ][line width=0.75]      (0, 0) circle [x radius= 3.35, y radius= 3.35]   ;
				\draw [shift={(193.37,243.39)}, rotate = 0] [color={rgb, 255:red, 0; green, 0; blue, 0 }  ][fill={rgb, 255:red, 0; green, 0; blue, 0 }  ][line width=0.75]      (0, 0) circle [x radius= 3.35, y radius= 3.35]   ;
				\draw    (282.47,243.39) ;
				\draw [shift={(282.47,243.39)}, rotate = 0] [color={rgb, 255:red, 0; green, 0; blue, 0 }  ][fill={rgb, 255:red, 0; green, 0; blue, 0 }  ][line width=0.75]      (0, 0) circle [x radius= 3.35, y radius= 3.35]   ;
				\draw [shift={(282.47,243.39)}, rotate = 0] [color={rgb, 255:red, 0; green, 0; blue, 0 }  ][fill={rgb, 255:red, 0; green, 0; blue, 0 }  ][line width=0.75]      (0, 0) circle [x radius= 3.35, y radius= 3.35]   ;
				\draw    (349.37,243.39) ;
				\draw [shift={(349.37,243.39)}, rotate = 0] [color={rgb, 255:red, 0; green, 0; blue, 0 }  ][fill={rgb, 255:red, 0; green, 0; blue, 0 }  ][line width=0.75]      (0, 0) circle [x radius= 3.35, y radius= 3.35]   ;
				\draw [shift={(349.37,243.39)}, rotate = 0] [color={rgb, 255:red, 0; green, 0; blue, 0 }  ][fill={rgb, 255:red, 0; green, 0; blue, 0 }  ][line width=0.75]      (0, 0) circle [x radius= 3.35, y radius= 3.35]   ;
				\draw    (315.92,31.13) ;
				\draw [shift={(315.92,31.13)}, rotate = 0] [color={rgb, 255:red, 0; green, 0; blue, 0 }  ][fill={rgb, 255:red, 0; green, 0; blue, 0 }  ][line width=0.75]      (0, 0) circle [x radius= 3.35, y radius= 3.35]   ;
				\draw [shift={(315.92,31.13)}, rotate = 0] [color={rgb, 255:red, 0; green, 0; blue, 0 }  ][fill={rgb, 255:red, 0; green, 0; blue, 0 }  ][line width=0.75]      (0, 0) circle [x radius= 3.35, y radius= 3.35]   ;
				\draw  [color={rgb, 255:red, 80; green, 227; blue, 194 }  ,draw opacity=0.27 ][line width=6]  (149.76,165.3) .. controls (119.76,197.3) and (107.76,239.48) .. (127.76,267.89) .. controls (147.76,296.3) and (196.76,312.3) .. (240.76,306.3) .. controls (284.76,300.3) and (352.76,246.3) .. (349.37,243.39) .. controls (345.98,240.48) and (303.76,241.3) .. (282.47,243.39) .. controls (261.17,245.48) and (206.21,242.56) .. (193.37,243.39) .. controls (180.53,244.22) and (252.76,146.3) .. (238.47,164.07) .. controls (224.17,181.83) and (278.76,108.89) .. (270.82,110.45) .. controls (262.88,112.01) and (179.76,133.3) .. (149.76,165.3) -- cycle ;
				\draw  [color={rgb, 255:red, 189; green, 16; blue, 224 }  ,draw opacity=0.23 ][line width=6]  (438.47,243.39) .. controls (444.33,240.46) and (406.78,189.14) .. (377.19,137.26) .. controls (347.6,85.38) and (319.08,24.37) .. (315.92,31.13) .. controls (312.76,37.89) and (316.76,65.89) .. (316.76,96.89) .. controls (316.76,127.89) and (315.42,144.87) .. (315.92,150.07) .. controls (316.41,155.27) and (344.76,172.89) .. (369.76,190.89) .. controls (394.76,208.89) and (432.6,246.32) .. (438.47,243.39) -- cycle ;
				
				\draw (169,237.4) node [anchor=north west][inner sep=0.75pt]    {$v_{1}$};
				
				\draw (260,90) node [anchor=north][inner sep=0.75pt]    {$u_{1}$};
				\draw (360,270) node [anchor=south][inner sep=0.75pt]    {$u_{k}$};
				\draw (130,210) node [anchor= west][inner sep=0.75pt]    {$u_{1}Pu_{k}$};
				\draw (308,8.4) node [anchor=north west][inner sep=0.75pt]    {$v_{2}$};
				\draw (443,240.4) node [anchor=north west][inner sep=0.75pt]    {$v_{3}$};
				\draw (321,133.4) node [anchor=north west][inner sep=0.75pt]    {$v_{4}$};

			\end{tikzpicture}
			
			\vspace{-3cm}
		\end{center}
		\caption{Even subdivision of \(K_{4}\). Case 2.1}
		\label{K4FiguraCase21}
		
	\end{figure}

	 \textbf{Case 2.2.} $\text{dist}(v_{1},u_{1})\equiv 1  \pmod{2}$ y $\text{dist}(v_{1},u_{k})\equiv 1  \pmod{2}$. See \cref{K4FiguraCase22}. Choosing a perfect matching of \(G\) such that the path from \(v_1\) to \(v_{2}\) is \(nn\)-alternating, and the path from \(v_{1}\) to \(v_{3}\) is \(mm\)-alternating.  We obtain a Tposy that uses \(u_{1}Pu_{k}\) as part of one of its blossoms. 
	 
	\textbf{Case 2.2.3.} $\text{dist}(v_{1},u_{1})\equiv 1 \pmod{2}$ y $\text{dist}(v_{1},u_{k})\equiv 0\pmod{2}$. See \cref{K4FiguraCase23}. Choosing a perfect matching of \(G\) such that the path from \(v_1\) to \(v_{2}\) is \(mm\)-alternating, and the path from \(v_{1}\) to \(v_{3}\) is \(nn\)-alternating.  We obtain a Tposy that uses \(v_{1}Pv_{2}\) as part of one of its blossoms (This blossom uses also the \(mm\)-alternating path form \(v_{4}\) to \(v_{3}\)).
	 
	 \begin{figure}[H]
	 	
	 	\begin{center}
	 		\tikzset{every picture/.style={line width=0.75pt}} 

	 		\tikzset{every picture/.style={line width=0.75pt}} 
	 		
	 		\hspace{-3cm}
	 		\begin{tikzpicture}[x=0.75pt,y=0.75pt,yscale=-1,xscale=1, scale=.85]
	 			
	 			\draw    (423.47,247.39) -- (178.37,247.39) ;
	 			\draw [shift={(178.37,247.39)}, rotate = 180] [color={rgb, 255:red, 0; green, 0; blue, 0 }  ][fill={rgb, 255:red, 0; green, 0; blue, 0 }  ][line width=0.75]      (0, 0) circle [x radius= 3.35, y radius= 3.35]   ;
	 			\draw [shift={(423.47,247.39)}, rotate = 180] [color={rgb, 255:red, 0; green, 0; blue, 0 }  ][fill={rgb, 255:red, 0; green, 0; blue, 0 }  ][line width=0.75]      (0, 0) circle [x radius= 3.35, y radius= 3.35]   ;
	 			\draw    (423.47,247.39) -- (300.92,35.13) ;
	 			\draw [shift={(300.92,35.13)}, rotate = 240] [color={rgb, 255:red, 0; green, 0; blue, 0 }  ][fill={rgb, 255:red, 0; green, 0; blue, 0 }  ][line width=0.75]      (0, 0) circle [x radius= 3.35, y radius= 3.35]   ;
	 			\draw [shift={(423.47,247.39)}, rotate = 240] [color={rgb, 255:red, 0; green, 0; blue, 0 }  ][fill={rgb, 255:red, 0; green, 0; blue, 0 }  ][line width=0.75]      (0, 0) circle [x radius= 3.35, y radius= 3.35]   ;
	 			\draw    (178.37,247.39) -- (300.92,35.13) ;
	 			\draw [shift={(300.92,35.13)}, rotate = 300] [color={rgb, 255:red, 0; green, 0; blue, 0 }  ][fill={rgb, 255:red, 0; green, 0; blue, 0 }  ][line width=0.75]      (0, 0) circle [x radius= 3.35, y radius= 3.35]   ;
	 			\draw [shift={(178.37,247.39)}, rotate = 300] [color={rgb, 255:red, 0; green, 0; blue, 0 }  ][fill={rgb, 255:red, 0; green, 0; blue, 0 }  ][line width=0.75]      (0, 0) circle [x radius= 3.35, y radius= 3.35]   ;
	 			\draw    (423.47,247.39) -- (300.92,154.07) ;
	 			\draw [shift={(300.92,154.07)}, rotate = 217.29] [color={rgb, 255:red, 0; green, 0; blue, 0 }  ][fill={rgb, 255:red, 0; green, 0; blue, 0 }  ][line width=0.75]      (0, 0) circle [x radius= 3.35, y radius= 3.35]   ;
	 			\draw [shift={(423.47,247.39)}, rotate = 217.29] [color={rgb, 255:red, 0; green, 0; blue, 0 }  ][fill={rgb, 255:red, 0; green, 0; blue, 0 }  ][line width=0.75]      (0, 0) circle [x radius= 3.35, y radius= 3.35]   ;
	 			\draw    (178.37,247.39) -- (300.92,154.07) ;
	 			\draw [shift={(300.92,154.07)}, rotate = 322.71] [color={rgb, 255:red, 0; green, 0; blue, 0 }  ][fill={rgb, 255:red, 0; green, 0; blue, 0 }  ][line width=0.75]      (0, 0) circle [x radius= 3.35, y radius= 3.35]   ;
	 			\draw [shift={(178.37,247.39)}, rotate = 322.71] [color={rgb, 255:red, 0; green, 0; blue, 0 }  ][fill={rgb, 255:red, 0; green, 0; blue, 0 }  ][line width=0.75]      (0, 0) circle [x radius= 3.35, y radius= 3.35]   ;
	 			\draw    (300.92,154.07) -- (300.92,35.13) ;
	 			\draw [shift={(300.92,35.13)}, rotate = 270] [color={rgb, 255:red, 0; green, 0; blue, 0 }  ][fill={rgb, 255:red, 0; green, 0; blue, 0 }  ][line width=0.75]      (0, 0) circle [x radius= 3.35, y radius= 3.35]   ;
	 			\draw [shift={(300.92,154.07)}, rotate = 270] [color={rgb, 255:red, 0; green, 0; blue, 0 }  ][fill={rgb, 255:red, 0; green, 0; blue, 0 }  ][line width=0.75]      (0, 0) circle [x radius= 3.35, y radius= 3.35]   ;
	 			\draw    (267.47,247.39) -- (178.37,247.39) ;
	 			\draw [shift={(178.37,247.39)}, rotate = 180] [color={rgb, 255:red, 0; green, 0; blue, 0 }  ][fill={rgb, 255:red, 0; green, 0; blue, 0 }  ][line width=0.75]      (0, 0) circle [x radius= 3.35, y radius= 3.35]   ;
	 			\draw [shift={(267.47,247.39)}, rotate = 180] [color={rgb, 255:red, 0; green, 0; blue, 0 }  ][fill={rgb, 255:red, 0; green, 0; blue, 0 }  ][line width=0.75]      (0, 0) circle [x radius= 3.35, y radius= 3.35]   ;
	 			\draw    (423.47,247.39) -- (334.37,247.39) ;
	 			\draw [shift={(334.37,247.39)}, rotate = 180] [color={rgb, 255:red, 0; green, 0; blue, 0 }  ][fill={rgb, 255:red, 0; green, 0; blue, 0 }  ][line width=0.75]      (0, 0) circle [x radius= 3.35, y radius= 3.35]   ;
	 			\draw [shift={(423.47,247.39)}, rotate = 180] [color={rgb, 255:red, 0; green, 0; blue, 0 }  ][fill={rgb, 255:red, 0; green, 0; blue, 0 }  ][line width=0.75]      (0, 0) circle [x radius= 3.35, y radius= 3.35]   ;
	 			\draw    (223.47,168.07) -- (178.37,247.39) ;
	 			\draw [shift={(178.37,247.39)}, rotate = 119.62] [color={rgb, 255:red, 0; green, 0; blue, 0 }  ][fill={rgb, 255:red, 0; green, 0; blue, 0 }  ][line width=0.75]      (0, 0) circle [x radius= 3.35, y radius= 3.35]   ;
	 			\draw [shift={(223.47,168.07)}, rotate = 119.62] [color={rgb, 255:red, 0; green, 0; blue, 0 }  ][fill={rgb, 255:red, 0; green, 0; blue, 0 }  ][line width=0.75]      (0, 0) circle [x radius= 3.35, y radius= 3.35]   ;
	 			\draw    (300.92,35.13) -- (255.82,114.45) ;
	 			\draw [shift={(255.82,114.45)}, rotate = 119.62] [color={rgb, 255:red, 0; green, 0; blue, 0 }  ][fill={rgb, 255:red, 0; green, 0; blue, 0 }  ][line width=0.75]      (0, 0) circle [x radius= 3.35, y radius= 3.35]   ;
	 			\draw [shift={(300.92,35.13)}, rotate = 119.62] [color={rgb, 255:red, 0; green, 0; blue, 0 }  ][fill={rgb, 255:red, 0; green, 0; blue, 0 }  ][line width=0.75]      (0, 0) circle [x radius= 3.35, y radius= 3.35]   ;
	 			\draw    (267.47,247.39) .. controls (120.76,399.2) and (36.76,182.2) .. (223.47,168.07) ;
	 			\draw [color={rgb, 255:red, 255; green, 0; blue, 0 }  ,draw opacity=1 ][line width=1.5]    (267.47,247.39) -- (178.37,247.39) ;
	 			\draw [color={rgb, 255:red, 255; green, 0; blue, 0 }  ,draw opacity=1 ][line width=1.5]    (255.82,114.45) -- (223.47,168.07) ;
	 			\draw [color={rgb, 255:red, 255; green, 0; blue, 0 }  ,draw opacity=1 ][line width=1.5]    (300.92,154.07) -- (300.92,35.13) ;
	 			\draw [color={rgb, 255:red, 255; green, 0; blue, 0 }  ,draw opacity=1 ][line width=1.5]    (423.47,247.39) -- (334.37,247.39) ;
	 			\draw [color={rgb, 255:red, 126; green, 211; blue, 33 }  ,draw opacity=0.35 ][line width=6]    (223.47,168.07) .. controls (239.76,139.08) and (248.76,125.08) .. (255.82,114.45) .. controls (262.88,103.82) and (297.47,33.4) .. (300.92,35.13) .. controls (304.37,36.86) and (301.76,132.08) .. (300.92,154.07) ;
	 			\draw    (267.47,247.39) ;
	 			\draw [shift={(267.47,247.39)}, rotate = 0] [color={rgb, 255:red, 0; green, 0; blue, 0 }  ][fill={rgb, 255:red, 0; green, 0; blue, 0 }  ][line width=0.75]      (0, 0) circle [x radius= 3.35, y radius= 3.35]   ;
	 			\draw [shift={(267.47,247.39)}, rotate = 0] [color={rgb, 255:red, 0; green, 0; blue, 0 }  ][fill={rgb, 255:red, 0; green, 0; blue, 0 }  ][line width=0.75]      (0, 0) circle [x radius= 3.35, y radius= 3.35]   ;
	 			\draw    (300.92,35.13) ;
	 			\draw [shift={(300.92,35.13)}, rotate = 0] [color={rgb, 255:red, 0; green, 0; blue, 0 }  ][fill={rgb, 255:red, 0; green, 0; blue, 0 }  ][line width=0.75]      (0, 0) circle [x radius= 3.35, y radius= 3.35]   ;
	 			\draw [shift={(300.92,35.13)}, rotate = 0] [color={rgb, 255:red, 0; green, 0; blue, 0 }  ][fill={rgb, 255:red, 0; green, 0; blue, 0 }  ][line width=0.75]      (0, 0) circle [x radius= 3.35, y radius= 3.35]   ;
	 			\draw    (300.92,154.07) ;
	 			\draw [shift={(300.92,154.07)}, rotate = 0] [color={rgb, 255:red, 0; green, 0; blue, 0 }  ][fill={rgb, 255:red, 0; green, 0; blue, 0 }  ][line width=0.75]      (0, 0) circle [x radius= 3.35, y radius= 3.35]   ;
	 			\draw [shift={(300.92,154.07)}, rotate = 0] [color={rgb, 255:red, 0; green, 0; blue, 0 }  ][fill={rgb, 255:red, 0; green, 0; blue, 0 }  ][line width=0.75]      (0, 0) circle [x radius= 3.35, y radius= 3.35]   ;
	 			\draw    (255.82,114.45) ;
	 			\draw [shift={(255.82,114.45)}, rotate = 0] [color={rgb, 255:red, 0; green, 0; blue, 0 }  ][fill={rgb, 255:red, 0; green, 0; blue, 0 }  ][line width=0.75]      (0, 0) circle [x radius= 3.35, y radius= 3.35]   ;
	 			\draw [shift={(255.82,114.45)}, rotate = 0] [color={rgb, 255:red, 0; green, 0; blue, 0 }  ][fill={rgb, 255:red, 0; green, 0; blue, 0 }  ][line width=0.75]      (0, 0) circle [x radius= 3.35, y radius= 3.35]   ;
	 			\draw    (223.47,168.07) ;
	 			\draw [shift={(223.47,168.07)}, rotate = 0] [color={rgb, 255:red, 0; green, 0; blue, 0 }  ][fill={rgb, 255:red, 0; green, 0; blue, 0 }  ][line width=0.75]      (0, 0) circle [x radius= 3.35, y radius= 3.35]   ;
	 			\draw [shift={(223.47,168.07)}, rotate = 0] [color={rgb, 255:red, 0; green, 0; blue, 0 }  ][fill={rgb, 255:red, 0; green, 0; blue, 0 }  ][line width=0.75]      (0, 0) circle [x radius= 3.35, y radius= 3.35]   ;
	 			\draw    (423.47,247.39) ;
	 			\draw [shift={(423.47,247.39)}, rotate = 0] [color={rgb, 255:red, 0; green, 0; blue, 0 }  ][fill={rgb, 255:red, 0; green, 0; blue, 0 }  ][line width=0.75]      (0, 0) circle [x radius= 3.35, y radius= 3.35]   ;
	 			\draw [shift={(423.47,247.39)}, rotate = 0] [color={rgb, 255:red, 0; green, 0; blue, 0 }  ][fill={rgb, 255:red, 0; green, 0; blue, 0 }  ][line width=0.75]      (0, 0) circle [x radius= 3.35, y radius= 3.35]   ;
	 			\draw    (178.37,247.39) ;
	 			\draw [shift={(178.37,247.39)}, rotate = 0] [color={rgb, 255:red, 0; green, 0; blue, 0 }  ][fill={rgb, 255:red, 0; green, 0; blue, 0 }  ][line width=0.75]      (0, 0) circle [x radius= 3.35, y radius= 3.35]   ;
	 			\draw [shift={(178.37,247.39)}, rotate = 0] [color={rgb, 255:red, 0; green, 0; blue, 0 }  ][fill={rgb, 255:red, 0; green, 0; blue, 0 }  ][line width=0.75]      (0, 0) circle [x radius= 3.35, y radius= 3.35]   ;
	 			\draw    (334.37,247.39) ;
	 			\draw [shift={(334.37,247.39)}, rotate = 0] [color={rgb, 255:red, 0; green, 0; blue, 0 }  ][fill={rgb, 255:red, 0; green, 0; blue, 0 }  ][line width=0.75]      (0, 0) circle [x radius= 3.35, y radius= 3.35]   ;
	 			\draw [shift={(334.37,247.39)}, rotate = 0] [color={rgb, 255:red, 0; green, 0; blue, 0 }  ][fill={rgb, 255:red, 0; green, 0; blue, 0 }  ][line width=0.75]      (0, 0) circle [x radius= 3.35, y radius= 3.35]   ;
	 			\draw  [color={rgb, 255:red, 189; green, 16; blue, 224 }  ,draw opacity=0.23 ][line width=6]  (178.37,247.39) .. controls (177.76,248.12) and (207.17,247.66) .. (267.47,247.39) .. controls (327.76,247.12) and (423.76,251.12) .. (423.47,247.39) .. controls (423.17,243.66) and (385.76,219.12) .. (362.19,200.73) .. controls (338.62,182.34) and (303.76,153.12) .. (300.92,154.07) .. controls (298.08,155.01) and (257.53,188.34) .. (239.64,200.73) .. controls (221.76,213.12) and (178.98,246.66) .. (178.37,247.39) -- cycle ;
	 			\draw  [color={rgb, 255:red, 80; green, 227; blue, 194 }  ,draw opacity=0.27 ][line width=6]  (159.76,185.12) .. controls (133.76,197.12) and (110.76,229.12) .. (115.76,254.12) .. controls (120.76,279.12) and (153.76,311.12) .. (184.76,300.12) .. controls (215.76,289.12) and (270.86,250.3) .. (267.47,247.39) .. controls (264.08,244.48) and (182.98,252.66) .. (178.37,247.39) .. controls (173.76,242.12) and (232.17,168.01) .. (223.47,168.07) .. controls (214.76,168.12) and (185.76,173.12) .. (159.76,185.12) -- cycle ;
	 			
	 			\draw (154,242.4) node [anchor=north west][inner sep=0.75pt]    {$v_{1}$};
	 			\draw (293,12.4) node [anchor=north west][inner sep=0.75pt]    {$v_{2}$};
	 			\draw (428,244.4) node [anchor=north west][inner sep=0.75pt]    {$v_{3}$};
	 			\draw (306,137.4) node [anchor=north west][inner sep=0.75pt]    {$v_{4}$};
	 			
	 			\draw (210,145) node [anchor=north][inner sep=0.75pt]    {$u_{1}$};
	 			\draw (275,275) node [anchor=south][inner sep=0.75pt]    {$u_{k}$};
	 			\draw (140,320) node [anchor= west][inner sep=0.75pt]    {$u_{1}Pu_{k}$};
	 		\end{tikzpicture}

	 		\vspace{-2cm}
	 	\end{center}
	 	\caption{Even subdivision of \(K_{4}\). Case 2.2}
	 	\label{K4FiguraCase22}
	 	
	 \end{figure}
	 
	 \begin{figure}[H]
	 	
	 	\begin{center}

	 		\tikzset{every picture/.style={line width=0.75pt}} 
	 		
	 		\hspace{-3cm}
	 		\begin{tikzpicture}[x=0.75pt,y=0.75pt,yscale=-1,xscale=1, scale=.85]
	 			
	 			\draw    (430.47,246.39) -- (185.37,246.39) ;
	 			\draw [shift={(185.37,246.39)}, rotate = 180] [color={rgb, 255:red, 0; green, 0; blue, 0 }  ][fill={rgb, 255:red, 0; green, 0; blue, 0 }  ][line width=0.75]      (0, 0) circle [x radius= 3.35, y radius= 3.35]   ;
	 			\draw [shift={(430.47,246.39)}, rotate = 180] [color={rgb, 255:red, 0; green, 0; blue, 0 }  ][fill={rgb, 255:red, 0; green, 0; blue, 0 }  ][line width=0.75]      (0, 0) circle [x radius= 3.35, y radius= 3.35]   ;
	 			\draw    (430.47,246.39) -- (307.92,34.13) ;
	 			\draw [shift={(307.92,34.13)}, rotate = 240] [color={rgb, 255:red, 0; green, 0; blue, 0 }  ][fill={rgb, 255:red, 0; green, 0; blue, 0 }  ][line width=0.75]      (0, 0) circle [x radius= 3.35, y radius= 3.35]   ;
	 			\draw [shift={(430.47,246.39)}, rotate = 240] [color={rgb, 255:red, 0; green, 0; blue, 0 }  ][fill={rgb, 255:red, 0; green, 0; blue, 0 }  ][line width=0.75]      (0, 0) circle [x radius= 3.35, y radius= 3.35]   ;
	 			\draw    (185.37,246.39) -- (307.92,34.13) ;
	 			\draw [shift={(307.92,34.13)}, rotate = 300] [color={rgb, 255:red, 0; green, 0; blue, 0 }  ][fill={rgb, 255:red, 0; green, 0; blue, 0 }  ][line width=0.75]      (0, 0) circle [x radius= 3.35, y radius= 3.35]   ;
	 			\draw [shift={(185.37,246.39)}, rotate = 300] [color={rgb, 255:red, 0; green, 0; blue, 0 }  ][fill={rgb, 255:red, 0; green, 0; blue, 0 }  ][line width=0.75]      (0, 0) circle [x radius= 3.35, y radius= 3.35]   ;
	 			\draw    (430.47,246.39) -- (307.92,153.07) ;
	 			\draw [shift={(307.92,153.07)}, rotate = 217.29] [color={rgb, 255:red, 0; green, 0; blue, 0 }  ][fill={rgb, 255:red, 0; green, 0; blue, 0 }  ][line width=0.75]      (0, 0) circle [x radius= 3.35, y radius= 3.35]   ;
	 			\draw [shift={(430.47,246.39)}, rotate = 217.29] [color={rgb, 255:red, 0; green, 0; blue, 0 }  ][fill={rgb, 255:red, 0; green, 0; blue, 0 }  ][line width=0.75]      (0, 0) circle [x radius= 3.35, y radius= 3.35]   ;
	 			\draw    (185.37,246.39) -- (307.92,153.07) ;
	 			\draw [shift={(307.92,153.07)}, rotate = 322.71] [color={rgb, 255:red, 0; green, 0; blue, 0 }  ][fill={rgb, 255:red, 0; green, 0; blue, 0 }  ][line width=0.75]      (0, 0) circle [x radius= 3.35, y radius= 3.35]   ;
	 			\draw [shift={(185.37,246.39)}, rotate = 322.71] [color={rgb, 255:red, 0; green, 0; blue, 0 }  ][fill={rgb, 255:red, 0; green, 0; blue, 0 }  ][line width=0.75]      (0, 0) circle [x radius= 3.35, y radius= 3.35]   ;
	 			\draw    (307.92,153.07) -- (307.92,34.13) ;
	 			\draw [shift={(307.92,34.13)}, rotate = 270] [color={rgb, 255:red, 0; green, 0; blue, 0 }  ][fill={rgb, 255:red, 0; green, 0; blue, 0 }  ][line width=0.75]      (0, 0) circle [x radius= 3.35, y radius= 3.35]   ;
	 			\draw [shift={(307.92,153.07)}, rotate = 270] [color={rgb, 255:red, 0; green, 0; blue, 0 }  ][fill={rgb, 255:red, 0; green, 0; blue, 0 }  ][line width=0.75]      (0, 0) circle [x radius= 3.35, y radius= 3.35]   ;
	 			\draw    (274.47,246.39) -- (185.37,246.39) ;
	 			\draw [shift={(185.37,246.39)}, rotate = 180] [color={rgb, 255:red, 0; green, 0; blue, 0 }  ][fill={rgb, 255:red, 0; green, 0; blue, 0 }  ][line width=0.75]      (0, 0) circle [x radius= 3.35, y radius= 3.35]   ;
	 			\draw [shift={(274.47,246.39)}, rotate = 180] [color={rgb, 255:red, 0; green, 0; blue, 0 }  ][fill={rgb, 255:red, 0; green, 0; blue, 0 }  ][line width=0.75]      (0, 0) circle [x radius= 3.35, y radius= 3.35]   ;
	 			\draw    (430.47,246.39) -- (341.37,246.39) ;
	 			\draw [shift={(341.37,246.39)}, rotate = 180] [color={rgb, 255:red, 0; green, 0; blue, 0 }  ][fill={rgb, 255:red, 0; green, 0; blue, 0 }  ][line width=0.75]      (0, 0) circle [x radius= 3.35, y radius= 3.35]   ;
	 			\draw [shift={(430.47,246.39)}, rotate = 180] [color={rgb, 255:red, 0; green, 0; blue, 0 }  ][fill={rgb, 255:red, 0; green, 0; blue, 0 }  ][line width=0.75]      (0, 0) circle [x radius= 3.35, y radius= 3.35]   ;
	 			\draw    (230.47,167.07) -- (185.37,246.39) ;
	 			\draw [shift={(185.37,246.39)}, rotate = 119.62] [color={rgb, 255:red, 0; green, 0; blue, 0 }  ][fill={rgb, 255:red, 0; green, 0; blue, 0 }  ][line width=0.75]      (0, 0) circle [x radius= 3.35, y radius= 3.35]   ;
	 			\draw [shift={(230.47,167.07)}, rotate = 119.62] [color={rgb, 255:red, 0; green, 0; blue, 0 }  ][fill={rgb, 255:red, 0; green, 0; blue, 0 }  ][line width=0.75]      (0, 0) circle [x radius= 3.35, y radius= 3.35]   ;
	 			\draw    (307.92,34.13) -- (262.82,113.45) ;
	 			\draw [shift={(262.82,113.45)}, rotate = 119.62] [color={rgb, 255:red, 0; green, 0; blue, 0 }  ][fill={rgb, 255:red, 0; green, 0; blue, 0 }  ][line width=0.75]      (0, 0) circle [x radius= 3.35, y radius= 3.35]   ;
	 			\draw [shift={(307.92,34.13)}, rotate = 119.62] [color={rgb, 255:red, 0; green, 0; blue, 0 }  ][fill={rgb, 255:red, 0; green, 0; blue, 0 }  ][line width=0.75]      (0, 0) circle [x radius= 3.35, y radius= 3.35]   ;
	 			\draw    (274.47,246.39) .. controls (77.76,395.71) and (76.11,127.58) .. (262.82,113.45) ;
	 			\draw [color={rgb, 255:red, 255; green, 0; blue, 0 }  ,draw opacity=1 ][line width=1.5]    (274.47,246.39) -- (341.37,246.39) ;
	 			\draw [color={rgb, 255:red, 255; green, 0; blue, 0 }  ,draw opacity=1 ][line width=1.5]    (307.92,34.13) -- (262.82,113.45) ;
	 			\draw [color={rgb, 255:red, 255; green, 0; blue, 0 }  ,draw opacity=1 ][line width=1.5]    (230.47,167.07) -- (185.37,246.39) ;
	 			\draw [color={rgb, 255:red, 255; green, 0; blue, 0 }  ,draw opacity=1 ][line width=1.5]    (307.92,153.07) -- (430.47,246.39) ;
	 			\draw    (185.37,246.39) ;
	 			\draw [shift={(185.37,246.39)}, rotate = 0] [color={rgb, 255:red, 0; green, 0; blue, 0 }  ][fill={rgb, 255:red, 0; green, 0; blue, 0 }  ][line width=0.75]      (0, 0) circle [x radius= 3.35, y radius= 3.35]   ;
	 			\draw [shift={(185.37,246.39)}, rotate = 0] [color={rgb, 255:red, 0; green, 0; blue, 0 }  ][fill={rgb, 255:red, 0; green, 0; blue, 0 }  ][line width=0.75]      (0, 0) circle [x radius= 3.35, y radius= 3.35]   ;
	 			\draw    (307.92,153.07) ;
	 			\draw [shift={(307.92,153.07)}, rotate = 0] [color={rgb, 255:red, 0; green, 0; blue, 0 }  ][fill={rgb, 255:red, 0; green, 0; blue, 0 }  ][line width=0.75]      (0, 0) circle [x radius= 3.35, y radius= 3.35]   ;
	 			\draw [shift={(307.92,153.07)}, rotate = 0] [color={rgb, 255:red, 0; green, 0; blue, 0 }  ][fill={rgb, 255:red, 0; green, 0; blue, 0 }  ][line width=0.75]      (0, 0) circle [x radius= 3.35, y radius= 3.35]   ;
	 			\draw    (341.37,246.39) ;
	 			\draw [shift={(341.37,246.39)}, rotate = 0] [color={rgb, 255:red, 0; green, 0; blue, 0 }  ][fill={rgb, 255:red, 0; green, 0; blue, 0 }  ][line width=0.75]      (0, 0) circle [x radius= 3.35, y radius= 3.35]   ;
	 			\draw [shift={(341.37,246.39)}, rotate = 0] [color={rgb, 255:red, 0; green, 0; blue, 0 }  ][fill={rgb, 255:red, 0; green, 0; blue, 0 }  ][line width=0.75]      (0, 0) circle [x radius= 3.35, y radius= 3.35]   ;
	 			\draw    (274.47,246.39) ;
	 			\draw [shift={(274.47,246.39)}, rotate = 0] [color={rgb, 255:red, 0; green, 0; blue, 0 }  ][fill={rgb, 255:red, 0; green, 0; blue, 0 }  ][line width=0.75]      (0, 0) circle [x radius= 3.35, y radius= 3.35]   ;
	 			\draw [shift={(274.47,246.39)}, rotate = 0] [color={rgb, 255:red, 0; green, 0; blue, 0 }  ][fill={rgb, 255:red, 0; green, 0; blue, 0 }  ][line width=0.75]      (0, 0) circle [x radius= 3.35, y radius= 3.35]   ;
	 			\draw    (307.92,34.13) ;
	 			\draw [shift={(307.92,34.13)}, rotate = 0] [color={rgb, 255:red, 0; green, 0; blue, 0 }  ][fill={rgb, 255:red, 0; green, 0; blue, 0 }  ][line width=0.75]      (0, 0) circle [x radius= 3.35, y radius= 3.35]   ;
	 			\draw [shift={(307.92,34.13)}, rotate = 0] [color={rgb, 255:red, 0; green, 0; blue, 0 }  ][fill={rgb, 255:red, 0; green, 0; blue, 0 }  ][line width=0.75]      (0, 0) circle [x radius= 3.35, y radius= 3.35]   ;
	 			\draw    (262.82,113.45) ;
	 			\draw [shift={(262.82,113.45)}, rotate = 0] [color={rgb, 255:red, 0; green, 0; blue, 0 }  ][fill={rgb, 255:red, 0; green, 0; blue, 0 }  ][line width=0.75]      (0, 0) circle [x radius= 3.35, y radius= 3.35]   ;
	 			\draw [shift={(262.82,113.45)}, rotate = 0] [color={rgb, 255:red, 0; green, 0; blue, 0 }  ][fill={rgb, 255:red, 0; green, 0; blue, 0 }  ][line width=0.75]      (0, 0) circle [x radius= 3.35, y radius= 3.35]   ;
	 			\draw    (230.47,167.07) ;
	 			\draw [shift={(230.47,167.07)}, rotate = 0] [color={rgb, 255:red, 0; green, 0; blue, 0 }  ][fill={rgb, 255:red, 0; green, 0; blue, 0 }  ][line width=0.75]      (0, 0) circle [x radius= 3.35, y radius= 3.35]   ;
	 			\draw [shift={(230.47,167.07)}, rotate = 0] [color={rgb, 255:red, 0; green, 0; blue, 0 }  ][fill={rgb, 255:red, 0; green, 0; blue, 0 }  ][line width=0.75]      (0, 0) circle [x radius= 3.35, y radius= 3.35]   ;
	 			\draw    (430.47,246.39) ;
	 			\draw [shift={(430.47,246.39)}, rotate = 0] [color={rgb, 255:red, 0; green, 0; blue, 0 }  ][fill={rgb, 255:red, 0; green, 0; blue, 0 }  ][line width=0.75]      (0, 0) circle [x radius= 3.35, y radius= 3.35]   ;
	 			\draw [shift={(430.47,246.39)}, rotate = 0] [color={rgb, 255:red, 0; green, 0; blue, 0 }  ][fill={rgb, 255:red, 0; green, 0; blue, 0 }  ][line width=0.75]      (0, 0) circle [x radius= 3.35, y radius= 3.35]   ;
	 			\draw [color={rgb, 255:red, 126; green, 211; blue, 33 }  ,draw opacity=0.35 ][line width=6]    (262.82,113.45) .. controls (283.47,76.07) and (289.47,67.07) .. (307.92,34.13) ;
	 			\draw  [color={rgb, 255:red, 189; green, 16; blue, 224 }  ,draw opacity=0.23 ][line width=6]  (430.47,246.39) .. controls (436.33,243.46) and (398.78,192.14) .. (369.19,140.26) .. controls (339.6,88.38) and (311.08,27.37) .. (307.92,34.13) .. controls (304.76,40.89) and (308.76,68.89) .. (308.76,99.89) .. controls (308.76,130.89) and (307.42,147.87) .. (307.92,153.07) .. controls (308.41,158.27) and (336.76,175.89) .. (361.76,193.89) .. controls (386.76,211.89) and (424.6,249.32) .. (430.47,246.39) -- cycle ;
	 			\draw  [color={rgb, 255:red, 80; green, 227; blue, 194 }  ,draw opacity=0.27 ][line width=6]  (131.76,201.71) .. controls (124.14,212.68) and (116.76,257.71) .. (140.76,280.71) .. controls (164.76,303.71) and (208.76,284.71) .. (224.76,279.71) .. controls (240.76,274.71) and (270.2,248.54) .. (274.47,246.39) .. controls (278.73,244.24) and (307.35,244.24) .. (317.76,245.71) .. controls (328.17,247.18) and (337.33,247.81) .. (341.37,246.39) .. controls (345.4,244.97) and (436.17,247.07) .. (430.47,246.39) .. controls (424.76,245.71) and (314.76,155.71) .. (307.92,153.07) .. controls (301.08,150.42) and (188.76,248.71) .. (185.37,246.39) .. controls (181.98,244.07) and (206.74,205.4) .. (230.47,167.07) .. controls (254.2,128.74) and (265.47,114.48) .. (262.82,113.45) .. controls (260.17,112.42) and (220.79,119.52) .. (186.76,138.71) .. controls (152.73,157.9) and (139.38,190.74) .. (131.76,201.71) -- cycle ;
	 			
	 			\draw (161,240.4) node [anchor=north west][inner sep=0.75pt]    {$v_{1}$};
	 			\draw (300,11.4) node [anchor=north west][inner sep=0.75pt]    {$v_{2}$};
	 			\draw (435,243.4) node [anchor=north west][inner sep=0.75pt]    {$v_{3}$};
	 			\draw (313,136.4) node [anchor=north west][inner sep=0.75pt]    {$v_{4}$};
	 			
	 			\draw (255,90) node [anchor=north][inner sep=0.75pt]    {$u_{1}$};
	 			\draw (275,275) node [anchor=south][inner sep=0.75pt]    {$u_{k}$};
	 			\draw (150,120) node [anchor= west][inner sep=0.75pt]    {$u_{1}Pu_{k}$};
	 			\end{tikzpicture}

	 			\vspace{-2.5cm}
 			\end{center}
 		\caption{Even subdivision of \(K_{4}\). Case 2.3}
 	\label{K4FiguraCase23}
	 	
	 \end{figure}
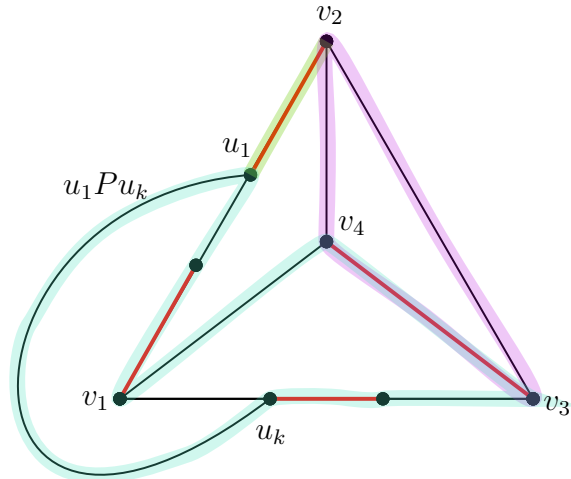

	 \textbf{Case 3.} $u_{1}\in V\left(v_{1}Pv_{2}\right)$ and $u_{k}\in V\left(v_{3}Pv_{4}\right)$. See \cref{K4FiguraCase3}. Choosing the perfect matching of \(G\) such that both path, \(v_{1}Pv_{2}\) and \(v_{3}Pv_{4}\), are \(mm\)-alternating.  We obtain a Tposy that uses \(u_{1}Pu_{k}\) as part of one of its blossoms (This blossom uses also the \(nn\)-alternating path form \(v_{2}\) to \(V_{4}\)).

	 \begin{figure}[H]
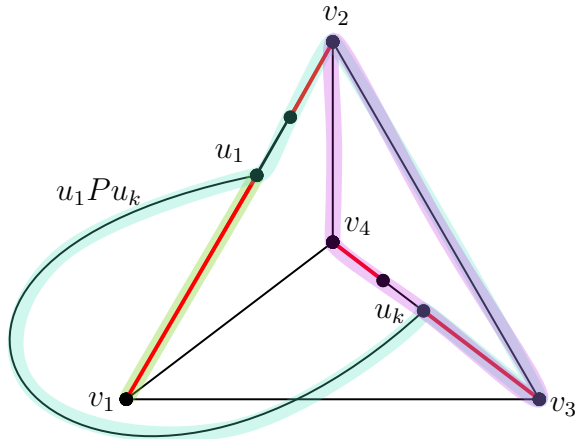

	 	
	 	\begin{center}

	 		\tikzset{every picture/.style={line width=0.75pt}} 
	 		
	 		\hspace{-4cm}


	 			\vspace{-3cm}
 			\end{center}
 		\caption{Even subdivision of \(K_{4}\). Case 3}
 	\label{K4FiguraCase3}
	 \end{figure}

\end{proof}	

%
%
%

\begin{figure}[h]
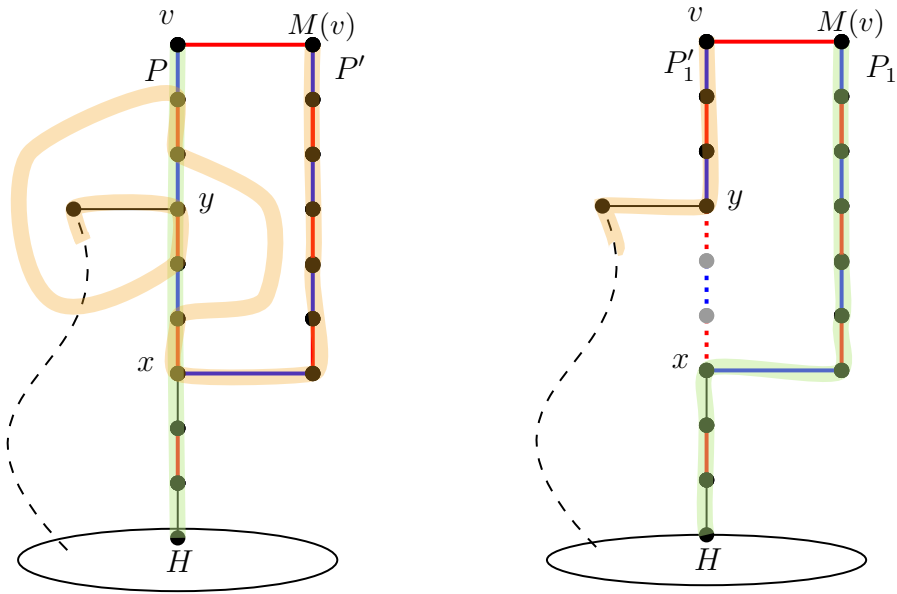

	\begin{center}			
		\tikzset{every picture/.style={line width=0.75pt}} 
		
		\hspace{-1.5cm}%
		
	\end{center}
	\caption{In this case, the matching is rotated along the cycle, yielding a new matching. Note that this does not affect the matching in $B$ or in $H$, and moreover, the resulting paths are alternating}
	\label{Figura261}
	
\end{figure}
%
%
%

The following result may appear na\"{\i}ve, but its proof is nontrivial.

\begin{theorem}\label{teo_main1}
	Let \( G \) be a Jposy-graph. Then every vertex of \( G \) belongs to an \( M \)-Tposy, for some maximum matching \( M \) of \( G \).
\end{theorem}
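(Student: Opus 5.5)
We argue by induction on \(|E(G)|\), using an ear decomposition and \cref{Lem_T} as the inductive step. Let \(G\) be a Jposy-graph with perfect matching \(M\) and an \(M\)-Jposy \(J\) that uses every vertex and edge of \(G\). The first step is to extract from \(J\) a Tposy subgraph \(H_0\subseteq G\). The walk of \(J\) is closed or joins two blossom bases. Following the proof of \cref{lem:noKE1}, we discard even cycles and use \cref{lem:tech1} to produce a second blossom when only one is present. This gives two \(M\)-blossoms joined by an odd \(mm\)-alternating path, i.e.\ a posy. If this path meets the blossoms internally, we shorten it at its first contact with a blossom and reroute along that blossom to its base, or pass to a smaller blossom. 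We iterate until the path is internally disjoint from both blossoms. The result is an \(M\)-Tposy \(H_0\), and \(G[V(H_0)]\) restricted to \(E(H_0)\) is a Tposy-graph, namely an even subdivision of a barbell or of \(K_4\).

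Next we grow \(H_0\) to all of \(G\) by adding odd ears. Suppose the current subgraph \(H\) is a union of Tposy-graph pieces covering a vertex set \(X\), with \(M\) restricted to \(X\) perfect. Since \(J\) traverses all of \(G\), there is a segment of the walk of \(J\) that leaves \(X\) and first returns to \(X\). Because \(M\) is perfect on \(X\) and the walk is \(M\)-alternating, such a segment must leave and re-enter along non-matched edges. Its vertices outside \(X\) are therefore matched among themselves, so after removing any internal even cycles its length is odd. We then apply \cref{adfw}. Either the segment becomes a path, which is an odd ear \(P\) of \(H\), or its first self-intersection outside \(X\) is a blossom. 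In the second case the segment is a "lollipop": an odd stem together with a blossom. We handle it by attaching the stem as an ear whose endpoints coincide, i.e.\ the case \(u=v\) of \cref{Lem_T}, after splitting the stem suitably. Edges of \(G\) inside \(X\) that are not yet in \(H\) are themselves odd or even chords. Odd chords (\(nn\) with respect to the alternating parity along \(H\)) are odd ears of length one. For even chords, we first perform a symmetric difference of \(M\) along an alternating cycle.

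At each step, \cref{Lem_T} guarantees that every vertex of \(H+P\) lies in a Tposy of \(H+P\), relative to some perfect matching of \(H+P\) obtained from \(M\) by rotations along cycles. Any perfect matching of this subgraph extends by \(M\) outside it to a perfect, hence maximum, matching of \(G\). A Tposy of a subgraph is a Tposy of \(G\), so the conclusion transfers.

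The main obstacle is that \cref{Lem_T} is stated only for a Tposy-graph \(G\), whereas after one ear the graph \(H+P\) is no longer a Tposy-graph, so the induction does not close as stated. To handle this, we would not keep \(H\) as a single Tposy-graph. Instead, for each new ear \(P\) we would show there is a Tposy-subgraph \(T\subseteq H\), under a suitable perfect matching of \(H\), such that \(P\) is an odd ear of \(T\). We would find \(T\) by taking a Tposy through an endpoint of \(P\) (which exists by induction) and joining it to the other endpoint with alternating paths, using \cref{triviallemma} and \cref{lem:JObs}. \cref{Lem_T} then applies to \(T+P\). Checking that the parity and matching compatibility needed for this reduction always holds is where we expect most of the case work, including the matching rotations suggested in \cref{Figura261}.
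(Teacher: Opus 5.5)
\textbf{There is a genuine gap: the inductive step.} You flag it but do not close it, and the repair you sketch is false in general. After the first ear, the accumulated graph $H$ is no longer a Tposy-graph. You propose to find one Tposy $T\subseteq H$, under some perfect matching of $H$, that contains both ends of the next ear $P$. Here is a counterexample.

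\emph{The graph.} Take the triangles $A=a_1a_2a_3$, $C=c_1c_2c_3$, $Q=p_1qr$ and the path $a_1p_1p_2c_1$. This graph $H$ has the unique perfect matching $M=\{a_2a_3,a_1p_1,p_2c_1,c_2c_3,qr\}$. Add the ear $P=q\,x\,y\,c_3$ with $xy\in M$.

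\emph{It is a Jposy-graph reached by your process.} $H+P$ is a Jposy-graph: take the blossoms $A$, $C$ and the $mm$-walk from $a_1$ to $c_1$ that
\begin{itemize}
\item runs $a_1p_1p_2c_1$, goes around $C$ and back to $a_1$, then around $A$;
\item makes the detour $p_1qrp_1$, then goes around $A$ again;
\item runs $p_1rqxyc_3c_2c_1p_2p_1$;
\item finishes around $A$ and along $a_1p_1p_2c_1$.
\end{itemize}
Suppose you take segments in the order this walk presents them. Your procedure extracts the Tposy $H_0$ formed by $A$, $C$ and $a_1p_1p_2c_1$. It then absorbs the ear $p_1qrp_1$ (the case $u=v$ of \cref{Lem_T}), reaching exactly $H$. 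The next segment is $P$.

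\emph{The reduction cannot be set up.} The only Tposies of $H$ are $H_0$ and $A\cup\{a_1p_1\}\cup Q$, because the bases $p_1$ and $c_1$ are joined only by the $nm$-path $p_1p_2c_1$. So no Tposy of $H$ contains both $q$ and $c_3$.

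\emph{The true covering Tposy lies outside your scheme.} The vertices $x,y$ are covered in $H+P$, by $A$ and the blossom $p_1rqxyc_3c_2c_1p_2p_1$ joined by $a_1p_1$. But that blossom uses edges of both Tposies of $H$. It arises from $H_0$ plus the longer ear $p_1rqxyc_3$, which re-uses the vertices $q,r$ you had already absorbed. In effect your scheme needs a version of \cref{Lem_T} for graphs that are unions of several Tposies, and that is the real content of the theorem.

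\emph{The lollipop case fails more directly.} An even stem from $X$ to the base of a blossom outside $X$, together with that blossom, is not an ear. No splitting of the stem makes \cref{Lem_T} applicable to it.

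\textbf{How the paper avoids this.} It never accumulates ears. Your first step (extracting a Tposy from $J$) plays the role of \cref{lem:noKE1} and \cref{safe}. After that, the paper fixes this one $M$-Tposy $H$ and argues vertex by vertex. For $v\notin V(H)$ it takes $nn$-alternating walks from $v$ and from $M(v)$ into $H$ (\cref{triviallemma}) and trims them with \cref{adfw}.
\begin{itemize}
\item If both become paths into $H$ that do not meet outside $H$, their union with the edge $vM(v)$ is a single odd ear of the Tposy-graph $H$ itself. Then \cref{Lem_T} applies exactly as stated. In the example, $x\,q\,r\,p_1$ and $y\,c_3$ give the ear $p_1rqxyc_3$ of $H_0$.
\item Otherwise the two paths meet, or one of them runs into a blossom before reaching $H$ (your lollipop). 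Then the paper exhibits a Jposy through $v$ on fewer vertices or edges, in one subcase after rotating $M$ along an even alternating cycle. It applies the induction hypothesis on $|G|$ to that smaller Jposy-graph.
\end{itemize}
So \cref{Lem_T} is only ever used on the original Tposy plus one ear, and everything else goes into an induction over Jposy-graphs rather than over ear-decomposition stages.

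A minor point: chords inside $X$ need no treatment. They add no vertices, and every Tposy of the smaller graph remains one.
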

\begin{proof}
	The proof is by induction on the size of the graph $G$. The base case is direct. By \cref{lem:noKE1}, $G$ is not a K\H{o}nig-Egerv\'ary graph, and by \cref{safe}, $G$ contains an $M$-Tposy $H$. Given $v\in V(G)- V(H)$, we will show that $v$ belongs to an Tposy of $G$ relative a some maximum matching \(M^{*}\) of \(G\).
		
	For any vertex \( x \in V(G) \setminus V(H) \), by \cref{triviallemma}, there exists an \( M \)-\( nn \)-alternating walk \(W\) from \( x \) to a vertex in \( H \). Without loss of generality, assume that such an \( M \)-\( nn \)-alternating walk visits exactly one vertex \(h\) of \( H \), and does so only once. We call this vertex \(H\) the ``entry'' to \(H\).
	By \cref{lem:JObs}, from there exists an \(M\)-\(mm\)-alternating path from the entry to the base of some of the blossoms of \(H\).  Therefore, for any vertex \( x \in V(G) \setminus V(H) \), by \cref{adfw}, one of the following statements hold:
		\begin{enumerate}
			\item\label{Con_1} there exists an $nn$-alternating subpath of \(W\) from $x$ to a vertex of \(H\) internally disjoint from \(H\),
			\item\label{Con_2} there exists an $nm$-alternating subpath $P = x, \dots, b$ of \(W\) connecting $x$ to the base of a \(M\)-blossom $B$ with base \(b\) such that $\left(V(P)\cup V(B)\right)\cap V(H) = \emptyset$ and $V(P)\cap V(B) = \{b\}$.
		\end{enumerate}
		Using this observation for either $v$ or $M(v)$, we distinguish the following three cases:
		
		\textbf{Case 1.} Suppose both $v$ and $M(v)$ satisfy \cref{Con_2}. Then $v$ is contained in a Jposy disjoint from $H$. By induction, there exists a Tposy of $G$ containing $v$.
		
		\textbf{Case 2.} Suppose both $v$ and $M(v)$ satisfy \cref{Con_1}. Let $P$ and $P'$ be the paths associated with $v$ and $M(v)$ respectively, as guaranteed by \cref{Con_1}. We consider two subcases.
		
		\textbf{Case 2.1.} $V(P)\cap V(P') = \emptyset$. This case follows directly from \cref{Lem_T}.
		
		\textbf{Case 2.2.} $V(P)\cap V(P') \neq \emptyset$. If $P'$ uses an edge $yM(y)\in E(P)$ in the same direction as $P$, where \(M(y)\) is closer to \(H\) than \(y\), then it emerges an \(M\)-Jposy given by the \(M\)-blossom \(y,Pv,M(v),P^\prime y\), the \(M\)-\(nn\)-alternating path \(y,Ph,M(h),\dots, b\) the base of one of the \(M\)-blossom in \(H\) and the this \(M\)-blossom. Thus, by induction there exists a $T$-posy of $G$ containing $v$.
		
		Assume that all the edges in common between \(P\) and \(P\prime\) are use in opposite direction. Let $x$ be the first vertex of $P'$ lying in $V(P)$. If $P$ forms an odd cycle with $P'x$, then $v$ is contained in a blossom whose base is $mm$-connected to a blossom base in $H$. Thus, by induction there exists a $T$-posy of $G$ containing $v$.
		
		Suppose the cycle is even. Let $y$ be the last vertex of $P'$ in $Px$. Then we redefine the paths $P$ and $P'$ by rotating the matching in the cycle, call this new maximum matching \(M^\prime\), see \cref{Figura261}. Note that \(H\) is also Tposy relative to \(M^\prime\). We define:
		\[
		P_1 = M(v), P'x, xP 
		\]
		and 
		\[
		P_1' = v, Py, yP'.
		\]
		These paths and the two ``half Tposy'' of \(H\) that they determine form an \(M^\prime \)-Jposy with less edges. Thus, by induction there exists a $T$-posy of $G$ containing $v$.
		
		\textbf{Case 3.} Suppose $v$ does not satisfy \cref{Con_1} but $M(v)$ does. Let $P$ and $B$ be the path and blossom obtained by $v$ satisfying \cref{Con_2}, and let $P'$ be the path for $M(v)$ to \(H\) given by \cref{Con_1}. If we consider the ``half Tposy'' from the entry of \(P'\) to \(H\), it emerges a an smaller Jposy relative to \(M\). Thus, by induction there exists a $T$-posy of $G$ containing $v$.
	\end{proof}

	%
	%
	%

		As an immediate corollary of the previous theorem, we obtain the next result.
	
	\begin{corollary}
		Let $G$ be a graph with a perfect matching. $G$ has the K\H{o}nig-Egerv\'ary property if and only if it has no an Jposy relative to any perfect matching $M$.
	\end{corollary}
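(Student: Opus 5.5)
The plan is to prove the two implications separately, relying on the results already established rather than on \cref{teo_main1} itself. The one structural fact I will use throughout is that, since $G$ has a perfect matching, its maximum matchings are exactly its perfect matchings. So ``relative to any perfect matching'' and ``relative to any maximum matching'' mean the same thing here.

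\emph{Necessity.} Suppose $G$ has an $M$-Jposy for some perfect matching $M$. Then $M$ is a maximum matching, so \cref{lem:noKE1} applies directly and shows that $G$ is not K\H{o}nig--Egerv\'ary. Contrapositively, a K\H{o}nig--Egerv\'ary graph with a perfect matching has no Jposy relative to any perfect matching.

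\emph{Sufficiency.} Suppose $G$ is not K\H{o}nig--Egerv\'ary. By \cref{safe} there is a maximum matching $M$, which is perfect by the remark above, such that $G$ contains an $M$-flower or an $M$-Tposy. I will first rule out the flower.
\begin{itemize}
\item An $M$-flower consists of a blossom and a stem, and the stem is an alternating path ending at an $M$-unmatched vertex.
\item If the stem is trivial, this unmatched vertex is the base of the blossom. Otherwise it is the far endpoint of the stem.
\item In either case $M$ would leave a vertex unsaturated, which is impossible for a perfect matching.
\end{itemize}
Hence $G$ contains an $M$-Tposy. By definition this consists of two distinct $M$-blossoms whose bases are joined by an odd $mm$-alternating path. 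A path is in particular an alternating walk, so the Tposy is already an $M$-Jposy relative to the perfect matching $M$. This completes the converse.

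The only delicate step is excluding flowers. I have to make sure the degenerate case of a zero-length stem is covered: there the base of the blossom must itself be unmatched, which contradicts perfectness. Everything else is a direct citation of \cref{lem:noKE1} and \cref{safe}, together with the observation that every Tposy is a Jposy.
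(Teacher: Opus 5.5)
Your proof is correct, and for the implication from an $M$-Jposy ($M$ perfect) to non-K\H{o}nig--Egerv\'ary it takes a shorter route than the paper. The paper does not cite \cref{lem:noKE1} at that point. Instead it takes the subgraph $H$ carried by the Jposy (on which $M$ restricts to a perfect matching) and applies \cref{teo_main1} to get an $M'$-Tposy of $H$ through a chosen vertex. It then glues $M'$ with $M\setminus E(H)$ into a perfect matching $M''$ of $G$, and only then concludes via \cref{safe}. Citing \cref{lem:noKE1} directly bypasses the long inductive \cref{teo_main1}. In fact the proof of \cref{teo_main1} opens by invoking \cref{lem:noKE1}, so your argument rests on strictly fewer results. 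What the paper's detour buys is a stronger vertex-wise statement: every vertex of an $M$-Jposy lies in a Tposy relative to some perfect matching of $G$. That matters for the Sterboul--Deming vertex theory but is more than the corollary needs. For the converse, you use \cref{safe}, exclude flowers by perfectness (including the zero-length stem), and note that every Tposy is a Jposy; this is exactly what the paper compresses into ``the result follows by \cref{safe}''. One caveat, shared with the paper: \cref{lem:noKE1} needs the Jposy walk to be $mm$-alternating, as its proof and the paper's examples assume. If an odd $nn$-walk between bases were allowed, the implication would fail. A counterexample is two triangles $b_1pq$ and $b_2rs$ with $pq,rs\in M$, joined by the non-matching edge $b_1b_2$, plus pendant edges $b_1x,b_2y\in M$. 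This graph is K\H{o}nig--Egerv\'ary, with cover $\{b_1,b_2,p,r\}$.
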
	
	\begin{proof}
		Let $H$ be an $M$-Jposy of $G$ and let $v \in V(G)$. Then both $G - V(H)$ and $H$ have perfect matchings. By \cref{teo_main1}, there exists an $M'$-Tposy $H_1$ of $H$ such that $v \in V(H_1)$, where $M'$ is a perfect matching of $H$. Then 
		\[
		M'' = M' \cup \left(M \setminus E(H)\right)
		\]
		is a perfect matching of $G$, and there exists an $M''$-Tposy of $G$ containing $v$. The result follow by \cref{safe}.
	\end{proof}

	%
	%
	%
	
	\section{Jflower vertices are Jposy vertices or flower vertices }\label{Flower}
	
	\begin{theorem}\label{123d}
		If $G$ is an Jflower graph, then every $v\in V(G)$ is in an flower or in an Tposy of $G$ relative to some maximum matching.
	\end{theorem}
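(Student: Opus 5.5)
Let $G$ be a Jflower-graph, witnessed by a maximum matching $M$, a blossom $B$ with base $b$, and an $M$-alternating walk $W$ from $b$ to an $M$-unsaturated vertex $w$, with $V(G)=V(B)\cup V(W)$. Since $w$ is unsaturated and $W$ alternates, the last edge of $W$ is not in $M$, so $W$ starts at $b$ with an edge of $M$ or has length zero; note $b$ is not matched inside $B$. The plan is to simplify $W$ the way \cref{adfw} and \cref{lem:noKE1} simplify walks. We either reach an honest $M$-stem, which yields a flower, or an odd closed subwalk appears, which yields a second blossom and hence a Jposy sub-configuration, to which \cref{teo_main1} applies. We argue by induction on $|E(G)|$, in the same spirit as the proof of \cref{teo_main1}.

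\textbf{Step 1 (the flower part).} Traverse $W$ backwards from $w$ and stop at the first repeated vertex or the first vertex of $B$. If we reach $B$ at the base $b$ along a path with the correct parity, we obtain an $M$-stem, and $B$ together with it is an $M$-flower. If we reach $B$ at some other vertex $u$, we rotate $M$ along the even alternating portion of $B$ from $b$ to $u$, or equivalently along the appropriate arc. This makes $u$ the base of a blossom on $V(B)$ and again gives a flower. If instead a repetition occurs first, the closed portion is a closed alternating subwalk. By \cref{lem:tech1} it contains an odd cycle, which is a blossom $B'$ attached by an even alternating path to $w$, and $B'$ with this path is a flower. Let $F$ be the resulting flower. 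Every vertex on the path from $w$ to $F$ is then covered, and rotating $M$ along the stem, which is an even alternating path to an unsaturated vertex and so preserves maximality, lets us move the unsaturated vertex to any vertex of the stem or of the blossom.

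\textbf{Step 2 (remaining vertices).} Take a vertex $v\notin V(F)$. It lies on $W$, so it is joined by an alternating subwalk of $W$ to $F$ or to $B$. Following the case analysis in the proof of \cref{teo_main1}, reduce to a first-entry subpath $P$ from $v$ or $M(v)$ into $F$ that is internally disjoint from $F$. There are three situations:
\begin{enumerate}
\item $P$ is $nn$ and its end has the right parity in $F$. Then $F\cup P$, possibly after rotating $M$ along an even cycle, contains a new blossom through $v$ whose base is joined to the unsaturated vertex, giving a flower through $v$.
\item $P$ closes an odd cycle. This gives a second blossom $mm$-linked to the blossom of $F$, hence an $M$-Jposy containing $v$. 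Restricting to its vertex set gives a Jposy-graph, and \cref{teo_main1}, together with extending the matching as in the corollary, places $v$ in a Tposy of $G$.
\item Otherwise, $v$ lies on a smaller Jflower. We pass to it and apply induction.
\end{enumerate}

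The main obstacle is case (3) together with the re-matching bookkeeping. We must show that after rotating $M$ along even cycles, the needed structures, namely the blossom, the stem and $w$'s unsaturation, remain valid configurations with respect to a maximum matching of all of $G$. We must also make sure the smaller Jflower is genuinely smaller. I would handle this by always choosing $P$ shortest and rotating only along cycles disjoint from the stem, mirroring the treatment of \cref{Figura261}.
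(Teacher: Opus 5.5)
\textbf{There is a genuine gap, and it already appears in Step 1.} When your backward traversal from $w$ first meets $B$ at a vertex $u\neq b$, you rotate $M$ along an even arc of $B$ to make $u$ a base. This fails when $W$ is nontrivial. Then $b$ is matched outside $B$ by the first edge of $W$, while the even $M$-alternating arc from $u$ to $b$ enters $b$ through a non-matching edge. The symmetric difference therefore covers $b$ twice and is not a matching.

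The intended conclusion is also false in general. Take the $5$-cycle $B=b,c_1,c_2,c_3,c_4,b$, a vertex $d$ adjacent to $b$ and $c_4$, and a pendant vertex $w$ at $c_1$. Let $M=\{c_1c_2,c_3c_4,bd\}$ and $W=b,d,c_4,c_3,c_2,c_1,w$. This is a Jflower-graph, and your traversal stops at $u=c_1$. Checking the five possible bases shows that the $5$-cycle is the blossom of no flower, for any maximum matching. The flower that does cover every vertex uses a different blossom: the triangle $b,c_4,d$ created when $W$ re-enters $B$, with stem $c_4,c_3,c_2,c_1,w$.

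There is a smaller problem in the repetition case. If the first repetition closes an even cycle, that cycle is an even alternating cycle and contains no odd cycle. So \cref{lem:tech1} gives you no blossom there, and you would have to shortcut the cycle and continue.

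Step 2 does not close the argument either. Case (1) presupposes a flower analogue of \cref{Lem_T} that you never establish. Case (3), which you yourself flag as the main obstacle, is where all the difficulty sits, and nothing in your argument guarantees a strictly smaller Jflower through $v$.

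The idea you are missing is that none of this walk surgery has to be redone. The paper attaches to the unsaturated vertex $u$ a pendant edge $ux$ and a triangle $xyz$. In the new graph $G'$, $M\cup\{ux,yz\}$ is a perfect matching. The Jflower, with its walk extended by $ux$, becomes a spanning Jposy, with the triangle as a blossom of base $x$. So \cref{teo_main1} puts each $v$ in a Tposy $H$ of $G'$. Two facts then finish the proof: $ux$ and $yz$ lie in every perfect matching of $G'$, and every vertex of a Tposy is matched inside it and has degree at least two in it. This leaves two cases:
\begin{itemize}
\item $H$ avoids $x,y,z$, hence also $u$, whose partner is $x$. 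Then $H$ is a Tposy of $G$ for the restricted matching, which is maximum since $\mu(G')=\mu(G)+2$.
\item $H$ contains $u,x,y,z$, with the triangle as one blossom and $ux$ as the first edge of its connecting path. Deleting $x,y,z$ leaves a flower of $G$ whose stem ends at the unsaturated vertex $u$.
\end{itemize}
You already reach for \cref{teo_main1} in your case (2); the gadget lets you apply it to every vertex at once.
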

	\begin{proof}
		Let $u$ be the unsaturated vertex and let $G^{\prime}$ be the graph obtained from $G$ by attaching to $u$ a triangle connected by an edge,
		that is, $G^{\prime}=G^{\prime}(V(G)\cup\{x,y,z,w\},E(G)\cup\{ux,xy,xz,yz\})$; see \cref{Figuraflower}. Then $G^{\prime}$ is a Jposy graph. Note that the edges
		$ux,xy,xz,yz$ are contained in every perfect matching of $G^{\prime}$. Let $v\in V(G)$ and let $H$ be an $M$-Tposy of $G^{\prime}$ that contains $v$. If $V(H)\subseteq V(G)$, then $H$ is an $M^{\prime}\cap E(G)$-posy of $G$, where $M^{\prime}=M-\{ux,xy,xz,yz\}$. Otherwise, since a Tposy has no vertices of degree one, we must have $x,y,z,w\in V(H)$. Therefore, the vertices $V(H)-\{x,y,z,w\}$ induce an $M^{\prime}$-flower of $G$ that contains $v$.
	\end{proof}

\begin{figure}[h]
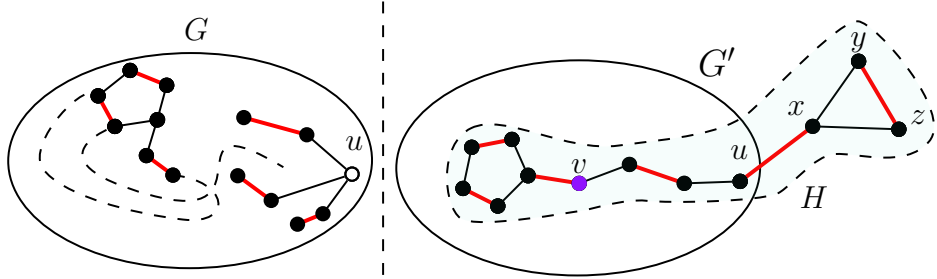

	\begin{center}			
		\tikzset{every picture/.style={line width=0.75pt}} 

	\end{center}
	\caption{Illustration of the proof of \cref{123d}}
	\label{Figuraflower}
\end{figure}

	%
	%
	%
	
	\section{Main result}
	
	Let \( G \) be a graph. Define \( V_{ESG}(G) \) as the set of all vertices of \( G \) that belong to some flower or some posy of \( G \). Similarly, let \( V_{T}(G) \) be the set of vertices of \( G \) that are contained in a flower or a Tposy, and let \( V_{J}(G) \) be the set of vertices of \( G \) that appear in some Jflower or some Jposy. Although Jflower and Jposy are not graph, it is natural to talk about vertices and edges of this configurations.
	
	Clearly, for any graph \( G \), we have the inclusions
	\[
	V_{T}(G) \subset V_{ESG}(G) \subset V_{J}(G).
	\]

		The main objective of this work is to establish the following result:
	
	\begin{theorem} \label{teo_main}
		For any graph \( G \), the following equalities hold:
		\[
		V_{T}(G) = V_{ESG}(G) = V_{J}(G).
		\]
	\end{theorem}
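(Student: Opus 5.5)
Since the chain $V_{T}(G) \subset V_{ESG}(G) \subset V_{J}(G)$ is already noted, it suffices to prove $V_{J}(G) \subseteq V_{T}(G)$. Fix $v \in V_{J}(G)$. Then $v$ lies in some $M$-Jflower or $M$-Jposy $J$ for a maximum matching $M$ of $G$. The plan is to cut out a subgraph on which \cref{teo_main1} or \cref{123d} applies, and then lift the resulting configuration back to $G$ by extending the local matching with $M$.

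\textbf{Step 1 (Jposy case).} Let $H_J$ be the subgraph of $G$ formed by the vertices and edges used by $J$: the blossom(s) together with the walk.
\begin{itemize}
\item Every vertex of $H_J$ lies on a blossom or on the odd $mm$-alternating walk whose ends are blossom bases. Hence it is $M$-saturated, and its $M$-partner also lies in $H_J$.
\item So $M_J := M \cap E(H_J)$ is a perfect matching of $H_J$, and $J$ is an $M_J$-Jposy using all vertices and edges of $H_J$. Thus $H_J$ is a Jposy-graph.
\item By \cref{teo_main1}, $v$ lies in an $M'$-Tposy $T$ of $H_J$ for some perfect matching $M'$ of $H_J$.
\item Put $M'' = M' \cup (M \setminus E(H_J))$. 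This is a matching of $G$ of the same size as $M$, since $M'$ and $M_J$ both saturate exactly $V(H_J)$. Hence $M''$ is maximum.
\item Blossoms and alternating paths inside $H_J$ are unchanged when we pass from $M'$ to $M''$, so $T$ is an $M''$-Tposy of $G$ containing $v$. This is exactly the argument of the corollary after \cref{teo_main1}.
\end{itemize}

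\textbf{Step 2 (Jflower case).} Let $H_J$ be the blossom together with the walk ending at the $M$-unsaturated vertex $u$.
\begin{itemize}
\item All vertices of $H_J$ other than $u$ are $M$-saturated within $H_J$: along the walk, each matched edge is traversed, and on the blossom every non-base vertex is matched inside it.
\item So $M_J = M \cap E(H_J)$ leaves exactly $u$ exposed, and $H_J$ is a Jflower-graph. Here I read the definition of Jflower-graph with a near-perfect matching; this is what the proof of \cref{123d} actually uses.
\item By \cref{123d}, $v$ lies in a flower or a Tposy of $H_J$ relative to a maximum matching $M'$ of $H_J$.
\item $M'$ misses one vertex $u'$ of $H_J$. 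As before set $M'' = M' \cup (M \setminus E(H_J))$, which has size $|M|$ and so is maximum in $G$.
\item A Tposy transfers directly. For a flower we need its stem to end at a vertex unsaturated by $M''$. The stem ends at the $M'$-exposed vertex $u'$, and $u'$ has no $M''$-partner: its only candidates are outside $H_J$, and those edges of $M$ are incident only to vertices outside $H_J$, because $u$ itself was $M$-exposed. So the flower is an $M''$-flower of $G$ containing $v$.
\end{itemize}

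The main obstacle is making sure that the restriction to $H_J$ really produces a Jposy-graph or Jflower-graph in the paper's sense. In particular, every edge of $H_J$ must be "used" by the configuration, which holds by construction. We also need that $M_J$ is maximum in $H_J$, so that the hypotheses of \cref{teo_main1} and \cref{123d} are met.
\begin{itemize}
\item In the Jposy case, $M_J$ is perfect, hence maximum.
\item In the Jflower case, $H_J$ contains a blossom, so it is not bipartite-trivial. I would argue $M_J$ is maximum by contradiction: a larger matching would give an $M_J$-augmenting path in $H_J$, and this path together with $M \setminus E(H_J)$ would yield an $M$-augmenting path in $G$. That contradicts the maximality of $M$, because every vertex of $H_J$ except $u$ is matched inside $H_J$.
\end{itemize}
With these points checked, $V_{J}(G) \subseteq V_{T}(G)$, and the three sets coincide.
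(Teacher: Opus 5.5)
Your proposal is correct and follows the paper's own proof. In both, you restrict to the subgraph spanned by the Jposy or Jflower, apply \cref{teo_main1} or \cref{123d} there, and extend the new matching by $M$ outside, exactly as in the corollary following \cref{teo_main1}; you also make explicit what the paper leaves implicit, namely that this subgraph is a Jposy- or Jflower-graph and that the stem's exposed end stays exposed (and in the Jflower case $M_J$ is maximum simply because $|V(H_J)|=2|M_J|+1$ is odd, so the augmenting-path argument is unnecessary).
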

	
	The vertices in this set are called the \emph{Sterboul-Deming-vertices}, or just \emph{SD-vertices}, of \( G \), and from this point on, the set of SD-vertices will be denoted by \( V_{SD}(G) \). A graph is called an \emph{Sterboul Deming graph} if \( V(G) = V_{SD}(G) \), Molina and Jaume originally called them FP-graphs. 
	
	\begin{proof}
		Let $v\in V_{SD}(G)$. If $v$ is in an $M$-Jposy $H$ of $G$, then by \cref{teo_main1}, we can modify $M$ within $H$ and find an $M^{\prime}$-Tposy of $G$ that contains $v$. Similarly, if $v$ is in an $M$-Jflower $H$ of $G$, then by \cref{123d}, we can modify the matching within $H$ and find an $M^{\prime}$-Tposy or an $M^{\prime}$-flower of $G$ that contains $v$.
	\end{proof}

	Sterboul-Deming graphs are related to the Hamiltonian cycle problem. Clearly, every Hamiltonian graph with an odd number of vertices is an Sterboul-Deming graph. We conjecture that every Hamiltonian graph with an even number of vertices is either a K\H{o}nig–Egerv\'ary graph or an Sterboul-Deming graph.

	%
	%
	%

	%
	%
	%
	\section*{Acknowledgments}
	
	This work was partially supported by Universidad Nacional de San Luis (Argentina), PROICO 03-0723, MATH AmSud, grant 22-MATH-02, Agencia I+D+i (Argentina), grants PICT-2020-Serie A-00549 and PICT-2021-CAT-II-00105, CONICET (Argentina) grant PIP 11220220100068CO.
	%
	%
	%
	
	
	\section*{Declaration of generative AI and AI-assisted technologies in the writing process}
	During the preparation of this work the authors used ChatGPT-3.5 in order to improve the grammar of several paragraphs of the text. After using this service, the authors reviewed and edited the content as needed and take full responsibility for the content of the publication.
	%
	%
	%
	
	\section*{Data availability}
	
	Data sharing not applicable to this article as no datasets were generated or analyzed during the current study.
	%
	%
	%
	
	\section*{Declarations}
	
	\noindent\textbf{Conflict of interest} \ The authors declare that they have no conflict of interest.

	%
	%
	%
	
	\bibliographystyle{acm}
	
	\bibliography{TAGcitas_J_posy_Flowers}
\end{document}